\numberwithin{equation}{section}
\newtheorem{theorem}{Theorem}[section]
\newtheorem{lemma}[theorem]{Lemma}
\newtheorem{corollary}[theorem]{Corollary}
\theoremstyle{definition}\newtheorem{definition}[theorem]{Definition}
\theoremstyle{theorem}\newtheorem{proposition}[theorem]{Proposition}
\theoremstyle{definition}
\theoremstyle{definition}
\theoremstyle{definition}\newtheorem{remark}[theorem]{Remark}
\theoremstyle{definition}
\newcommand{\al}{\alpha}
\newcommand{\del}{\delta}
\newcommand{\Del}{\Delta}
\newcommand{\Lam}{\Lambda}
\newcommand{\eps}{\epsilon}
\newcommand{\ka}{\kappa}
\newcommand{\sig}{\sigma}
\newcommand{\Sig}{\Sigma}
\newcommand{\om}{\omega}
\newcommand{\Om}{\Omega}
\newcommand{\vphi}{\varphi}
\newcommand{\cF}{\mathcal{F}}
\newcommand{\cO}{\mathcal{O}}
\newcommand{\bC}{\mathbb{C}}
\newcommand{\bR}{\mathbb{R}}
\newcommand{\bZ}{\mathbb{Z}}
\newcommand{\bQ}{\mathbb{Q}}
\newcommand{\SL}{\operatorname{SL}}
\newcommand{\GL}{\operatorname{GL}}
\newcommand{\Phisym}{\Phi_*}
\newcommand\ceil[1]{\lceil#1\rceil}
\newcommand{\defi}{\overset{\on{def}}{=}}
\newcommand{\comp}{\textrm{{\tiny $\circ$}}}
\newcommand\norm[1]{||#1||}
\newcommand\set[1]{\left\{#1\right\}}
\newcommand\pa[1]{\left(#1\right)}
\newcommand\idist[1]{\langle#1\rangle}
\newcommand\av[1]{|#1|}
\newcommand\on[1]{\operatorname{#1}}
\newcommand\diag[1]{\operatorname{diag}\left(#1\right)}
\newcommand\mb[1]{\mathbf{#1}}
\newcommand\mat[1]{\pa{\begin{matrix}#1\end{matrix}}}
\newcommand\br[1]{\left[#1\right]}
\newcommand\smallmat[1]{\pa{\begin{smallmatrix}#1\end{smallmatrix}}}
\newcommand\crly[1]{\mathscr{#1}}
\newcommand{\spa}{\on{span}}
\newcommand{\dsim}{\Del_*}
\newcommand{\onto}{\xymatrix{\ar@{>>}[r]&}}
\newcommand{\eq}[1]
{
\begin{equation*}
{#1}
\end{equation*}
}
\newcommand{\eqlabel}[2]
{
\begin{equation}
{#2}\label{#1}
\end{equation}
}
\begin{document}
\title{Full escape of mass for the diagonal group}
\author[Uri Shapira]{Uri Shapira}
\begin{abstract}
Building on the work of Cassels
we prove the existence of infinite families of compact orbits of the diagonal group in the space of lattices  
which accumulate only on the divergent
orbit of the standard lattice. As a consequence, we prove the existence of full escape of mass for sequences of such orbits, sharpening 
known results about possible partial escape of mass. The topological complexity of the visits of these orbits to a given compact
set in the space of lattices is analyzed. Along the way we develop new tools to compute regulators of orders in a number fields.
\end{abstract}
%%
%% Authors emails, addresses and acknowledgments
\address{Department of Mathematics\\
Technion \\
Haifa \\
Israel }
\email{ushapira@tx.technion.ac.il}
\thanks{The author acknowledges the support of the Chaya fellowship and ISF grant 357/13.}
\maketitle
\section{Introduction}\label{introduction section}
\subsection{Preliminary description of results}
Let $d\ge2$ be an integer and let $X\defi \SL_d(\bR)/\SL_d(\bZ)$ denote the space of unimodular lattices in $\bR^d$. Let $A<\SL_d(\bR)$ be the group
of diagonal matrices with positive diagonal entries. The aim of this paper is to establish the following result.
\begin{theorem}\label{t.0932}
There exists a sequence of distinct compact $A$-orbits $Ax_k$ such that any limit point of the form $x=\lim a_k x_k$, with $a_k\in A$, 
satisfies $x\in A\bZ^d$.
\end{theorem}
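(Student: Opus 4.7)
The plan rests on the classical correspondence between compact $A$-orbits in $X$ and equivalence classes of pairs $(\cO,\goa)$, where $\cO$ is an order in a totally real number field $K$ of degree $d$ and $\goa$ is a proper invertible $\cO$-ideal. Writing $\sig_1,\dots,\sig_d$ for the real embeddings of $K$ and $\Phi(\al)\defi(\sig_1(\al),\dots,\sig_d(\al))$, the rescaled Minkowski image $\Lam_{\cO,\goa}\defi c\,\Phi(\goa)$ (with $c>0$ chosen to produce unit covolume) represents a compact $A$-orbit in $X$, whose $A$-stabilizer is the image of $\cO^\times$ under the logarithmic embedding.

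The key step is to construct an infinite sequence of such pairs $(\cO_k,\goa_k)$ whose lattices $\Lam_k$ admit, for each $k$, a $\bZ$-basis $\xi_1^{(k)},\dots,\xi_d^{(k)}$ of $\goa_k$ with a ``nearly diagonal'' Minkowski image: there is $a_k\in A$ such that the matrix whose $j$-th column is $a_k\Phi(\xi_j^{(k)})$ tends to the identity as $k\to\infty$. Taking $x_k\defi\Lam_k$, this already yields $a_kx_k\to\bZ^d$, placing $\bZ^d$ (and hence its entire $A$-orbit) in the limit set of $\bigcup_k Ax_k$.

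To rule out other limit points, suppose $b_kx_k\to y$ in $X$ with $b_k\in A$. Set $c_k\defi b_ka_k^{-1}\in A$ and $\eta_k\defi a_kx_k$, so that $c_k\eta_k\to y$ and $\eta_k\to\bZ^d$. The lattice $\eta_k$ contains vectors converging to each of $e_1,\dots,e_d$, and $c_k$ scales $e_j$ by its $j$-th diagonal entry $c_k^{(j)}$. Since $\prod_jc_k^{(j)}=1$, any unbounded subsequence of $(c_k)$ would force some $c_k^{(j)}\to 0$, shrinking a vector of $c_k\eta_k$ to zero; Mahler's criterion would then preclude convergence in $X$. Hence $\{c_k\}$ is relatively compact in $A$, and along a subsequence $c_k\to c\in A$, giving $y=c\bZ^d\in A\bZ^d$.

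The main obstacle is the concrete construction of the pairs $(\cO_k,\goa_k)$. Near-diagonality amounts to exhibiting elements $\xi_j^{(k)}\in\goa_k$ with $|\sig_i(\xi_j^{(k)})|$ very small for $i\neq j$, while the collection $(\xi_j^{(k)})$ must span an ideal of bounded index so as to give a basis of $\Lam_k$. Building on Cassels, one targets orders whose unit-log lattices contain vectors pointing close to each coordinate direction; the regulator-computation tools developed in later sections of the paper provide the quantitative certification that such orders exist in sufficient abundance, and ensure that the resulting compact orbits are mutually distinct.
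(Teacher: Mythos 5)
Your reduction to the claim that $\{c_k\}$ is relatively compact in $A$ is where the argument breaks, and unfortunately this is exactly where the content of the theorem lies. The lattices $\eta_k=a_kx_k$ are close to $\bZ^d$ but not equal to it: the vector of $\eta_k$ near $e_j$ has the form $e_j+\veps^{(k)}$ with $\veps^{(k)}$ small but generically nonzero in the coordinates $i\ne j$. Applying $c_k=\diag{c_k^{(1)},\dots,c_k^{(d)}}$ produces a vector whose $j$-th coordinate $c_k^{(j)}(1+\veps^{(k)}_j)$ indeed tends to $0$ when $c_k^{(j)}\to0$, but whose $i$-th coordinates are $c_k^{(i)}\veps^{(k)}_i$, which can be of order $1$ or larger when $c_k$ is unbounded; no short vector is produced and Mahler's criterion gives no contradiction. (Concretely, $\diag{\veps^{-1},\veps}\smallmat{1&0\\ \veps&1}\bZ^2\to\bZ^2$ even though the diagonal factor is unbounded.) Worse, for the orbits in question $\{c_k\}$ genuinely cannot be bounded if it is to capture all limit points: $Ax_k$ is compact and meets the fixed compact set $X^{\ge\del_0}$ of Theorem~\ref{thm del0}, and its set of return times to that set is an unbounded subset of $\bR^d_0$ (it is invariant under the rank-$n$ stabilizer lattice $\Del_{x_k}$ and, in the construction, contains points near each of the $(d-1)!$ classes $\mb{w}_\tau+\Del_{x_k}$). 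So there are convergent sequences $b_kx_k$ with $b_ka_k^{-1}$ unbounded, and your argument says nothing about them.

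What is needed, and what the paper supplies, is control of the \emph{entire} set of times $\mb{t}$ with $a(\mb{t})x_k\in X^{\ge\del}$: the covering statement for simplex sets (Proposition~\ref{covering prop}) combined with the shortness of the vector $\mb{1}$ confines these times to balls of radius $r\asymp\av{\Del_{x_k}}^{\ka/n}$ around the finitely many critical points $\mb{W}'_{\Phi}$ modulo $\Del_{x_k}$ (Proposition~\ref{eomprop}); one then verifies that at \emph{each} $\mb{w}_\tau$, not just $\tau=\on{id}$, the lattice equals $g_\tau a(\mb{s}_\tau)\bZ^d$ with $\norm{g_\tau-I}\le Ce^{-\av{\Del_{x_k}}^\eps}$, and since $\ka<n\eps$ the conjugate $a(\mb{t})g_\tau a(-\mb{t})$ for $\mb{t}\in B_r$ is still close to $I$ (Proposition~\ref{accprop}). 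Your step 2 establishes the required closeness at only one such time and omits the escape-of-mass estimate entirely. The remainder of your outline (the Cassels-type construction, the nearly diagonal basis coming from the units $\theta-m_\ell$) does match the paper's route, but without the quantitative analysis of all return times the proof does not close.
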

We note that in dimension $d=2$ Peter Sarnak points out in~\cite{Sarnakreciprocal} a construction yielding the above result. In higher 
dimensions the analysis is more intricate.   

Given a compact orbit $Ax\subset X$, let us denote by $\mu_{Ax}$ the unique $A$-invariant probability measure supported on it. 
Theorem~\ref{t.0932} implies
the following result which sharpens~\cite[Theorem 1.10]{ELMV-Duke} in which only partial escape of mass is claimed.
\begin{corollary}\label{t.0901}
Let $Ax_k$ be a sequence of compact orbits satisfying the conclusion of Theorem~\ref{t.0932}. Then the probability measures
$\mu_{Ax_k}$ converge to the zero measure.
\end{corollary}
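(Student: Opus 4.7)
\emph{Proof plan.} The plan is to argue by contradiction using vague (weak-$*$) compactness of the space of sub-probability Radon measures on $X$. If $\mu_{Ax_k}$ does not converge to the zero measure, extracting a subsequence produces a nonzero positive Radon measure $\nu$ on $X$ with $\nu(X)\le 1$. I would first note that $\nu$ is $A$-invariant: each $\mu_{Ax_k}$ is, and this property passes to vague limits since $\int f\circ a\,d\mu_{Ax_k}=\int f\,d\mu_{Ax_k}$ for every $a\in A$ and $f\in C_c(X)$.

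The key step is to show $\supp(\nu)\subset A\bZ^d$. Given $y\in\supp(\nu)$ and a decreasing neighborhood basis $\set{V_n}$ of $y$, the portmanteau inequality $\liminf_k\mu_{Ax_k}(V_n)\ge\nu(V_n)>0$ implies $V_n\cap Ax_k\ne\emptyset$ for all large $k$. A diagonal extraction then furnishes indices $k_n\to\infty$ and elements $a_n\in A$ with $a_nx_{k_n}\to y$, and Theorem~\ref{t.0932} applied to this subsequence forces $y\in A\bZ^d$.

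To conclude, I would use that $A\bZ^d$ is a divergent $A$-orbit with trivial stabilizer in $A$, so the orbit map $A\to A\bZ^d$ is a proper continuous bijection, hence an $A$-equivariant homeomorphism onto the closed subset $A\bZ^d$ of $X$. Every $A$-invariant Radon measure on $A\bZ^d$ is therefore proportional to the Haar measure on $A$; since Haar on $A$ is infinite while $\nu$ has finite total mass, this forces $\nu=0$, contradicting $\nu\ne 0$. The main obstacle is the middle step, which converts the measure-theoretic assumption of a nontrivial vague limit into the existence of convergent translates $a_n x_{k_n}$ that can be fed into Theorem~\ref{t.0932}; the remaining pieces are soft arguments about invariant measures on a divergent orbit.
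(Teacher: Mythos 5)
Your proof is correct and follows the same overall structure as the paper's: pass to a vague limit $\nu$, use Theorem~\ref{t.0932} to show $\nu$ is supported on $A\bZ^d$, and then rule out a nonzero finite $A$-invariant measure there. Your middle step (portmanteau plus diagonal extraction to manufacture convergent translates $a_nx_{k_n}\to y$) is exactly the detail the paper leaves implicit, and it is carried out correctly. The only genuine divergence is in the last step: the paper invokes the Poincar\'e recurrence theorem (a finite $A$-invariant measure would force recurrence, which is incompatible with the divergence of the orbit $A\bZ^d$), whereas you use the fact that $A$ acts on $\bZ^d$ with trivial stabilizer and proper orbit map, so that $A\bZ^d$ is $A$-equivariantly homeomorphic to $A$ and any invariant Radon measure on it is a multiple of the (infinite) Haar measure. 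Both are sound soft arguments; yours is marginally more structural (it classifies all invariant Radon measures on the orbit), while the paper's recurrence argument is shorter and does not need the triviality of the stabilizer or properness as separate inputs beyond divergence of the orbit.
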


\begin{proof}
If $\mu$ is an accumulation point of $\mu_{Ax_k}$ then by Theorem~\ref{t.0932}, we conclude that $\mu$ is 
supported on the divergent orbit of the standard lattice.
By Poincar\'e recurrence theorem, the only finite $A$-invariant measure supported on $A\bZ^d$ is the zero measure.
\end{proof}
Our results are much more precise than Theorem~\ref{t.0932} but as a consequence, more lengthy to state and depend on various notation to be presented 
later. We roughly describe their features. For $\eps>0$ we set 
\eqlabel{eq length of vectors}{
X^{\ge\eps}\defi\set{x\in X:\ell(x)\ge \eps}\textrm{ where }\ell(x)\defi\min\set{\norm{v}:0\ne v\in x}.
}

In \S\ref{escape and geometry section} we isolate a family of compact orbits $\cF$, (which in~\S\ref{construction section} is shown to be infinite),
for which there exists a function $\eta:\cF\to(0,\infty)$ with $\eta(Ax)\to 0$ as $Ax\in \cF$ varies, such that for any $\del>0$ and for all but 
finitely many   
$Ax\in\cF$, the following statements are satisfied (see Propositions~\ref{eomprop}, \ref{accprop}, and Theorem~\ref{index theorem} for precise statements):
\begin{enumerate}
\item  $\mu_{Ax}(X^{\ge\del})\le \eta(Ax)$.
\item $\forall y \in Ax\cap X^{\ge\del},\; \on{d}(y, A\bZ^d)\le \eta(Ax)$.
\item The number of connected components of $Ax\cap X^{\ge \del}$ is at least $(d-1)!$.
\end{enumerate}
In fact, the results are sharper in the sense that $\eta(Ax)$ is explicit and more interestingly, $\del$ could be chosen to be an explicit function of 
the orbit such that $\del(Ax)\to 0$ as $Ax\in\cF$ varies.

From the point of view of algebraic number theory,  
Theorem~\ref{index theorem}, which gives new tools for showing that certain collections of units in an order $\cO$ generate
the group of units $\cO^\times$ up to torsion\footnote{This result is only applicable under certain assumptions relating the discriminant of the order
to the geometry of the collection of units.}, might be of interest on its own. This result is (on the face of it) unrelated to 
the discussion on escape of mass but its proof uses the analysis yielding Theorem~\ref{t.0932} in a fundamental way.
For instance,
given a vector of distinct integers $\mb{m}\in\bZ^d$, let $\theta_k$ be a root of the polynomial
$p_k(x)\defi \prod_1^d(x-km_i)-1$. Then, it follows from Theorem~\ref{index theorem} and 
the analysis in \S\ref{construction section}, that 
for all large enough $k$, the collection $\set{\theta_k-km_i}_{i=1}^d$
generates the group of units of the order $\bZ[\theta_k]$ up to roots of unity.

\subsection{Cassels' work on Minkowski's conjecture}
This paper originated from a simple observation made while reading Cassles' paper~\cite{Casselsnforms}
showing that the value $2^{-d}$ is not isolated in the Minkowski spectrum. Given a lattice $x\in X$ let us define 
$$\mu(x)=\sup_{\mb{v}\in\bR^d}\inf\set{\prod_1^d\av{w_i-v_i}:\mb{w}\in x},$$
and let us define the the \textit{Minkowski spectrum}  in dimension $d$ to be $\crly{M}\defi\set{\mu(x) :x\in X}$. A 
famous conjecture attributed to Minkowski asserts that $\crly{M}\subset [0,2^{-d}]$ and that $\mu(x) = 2^{-d}$ 
if and only if $x\in A\bZ^d$. 
  
In~\cite{Casselsnforms} Cassels gives a construction of a sequence $\set{x_k}$ of lattices having compact $A$-orbits
with the property that $\lim_k\mu(x_k)=2^{-d}$. Taking into account the fact that $\mu$ is $A$-invariant and upper semi-continuous
(in the sense that if $x_k\to x$ then $\limsup\mu(x_k)\le \mu(x)$), one arrives at the inevitable conclusion that, assuming Minkowski's conjecture
holds in dimension $d$, the sequence $x_k$ from Cassels' construction must satisfy the conclusion of Theorem~\ref{t.0932}; thus proving it 
in any dimension in which Minkowski's conjecture is known to hold. To this date Minkowski's conjecture has been validated up to dimension $n=9$ 
(see for example~\cite{McMullenMinkowski},\cite{Hans-Gill-dim8},\cite{Hans-Gill-dim9},\cite{SW2} for recent accounts). 

As it turns out, in order to prove Theorem~\ref{t.0932} one does not need Minkowski's conjecture as an input and it could be derived  by 
a careful analysis of Cassels' construction which we slightly generalize in \S\ref{construction section}.

\subsection{Homogeneous dynamics context}
Another context for interpreting the results is the comparison between unipotent dynamics and (higher-rank) diagonalizable dynamics. See
\cite{ELMV-Duke} for a thorough discussion explaining the fundamental differences between these two worlds. In this spirit, we remark that
Theorem~\ref{t.0932} and its corollary are in sharp contrast to the rigidity exhibited by sequences of periodic orbits $Hx_k$
if one assumes $H<\SL_d(\bR)$ is generated by unipotent elements. It is a consequence of a result
of Mozes and Shah~\cite{MozesShah} (relying on results by Dani and Margulis), that if $Hx_k\subset X$ is such a sequence and there is a fixed compact set $K\subset X$ such that
$Hx_k\cap K\ne\varnothing$, then any accumulation point of the sequence $\mu_{Hx_k}$ is a probability measure\footnote{The
information on such accumulation points is much more informative but we state here only the part relevant for us.}. 
We note that there exists
a compact set $K$ such that $Ax\cap K\ne\varnothing$ for any $x\in X$ (see Theorem~\ref{thm del0} and the discussion preceding it for references).

Having this comparison in mind, we find the following open questions (for $d\ge 3$) natural and interesting.
\begin{enumerate}[(Q1)]
\item Does there exists a sequence of compact orbits $Ax_k$ such that $\mu_{Ax_k}$ converges to a non-ergodic measure?
\item Does there exists a sequence of compact orbits $Ax_k$ such that $\mu_{Ax_k}$ exhibits strictly partial escape 
of mass (i.e.\ which converges to a non-zero measure of total mass $<1$)?
\item Does there exists a sequence of compact orbits $Ax_k$ such that $\mu_{Ax_k}$ converges to a 
limit whose ergodic decomposition contains a periodic $A$-invariant measure with positive weight?
\end{enumerate}
Another interesting question is whether or not one can construct a sequence of compact orbits $Ax_k$ with $\mu_{Ax_k}$ exhibiting
full (or even partial) escape of mass such that the lattices $x_k$ are geometric embeddings of full modules in a fixed number field. 
We note though that
in dimension 2 for example,  
due to the results in~\cite{AS}, the arithmetic relations between the full modules in such an example 
must involve infinitely many primes. Interestingly, in positive characteristics this situation breaks and there are examples in dimension
2 of sequences of compact orbits arising from a fixed quadratic field and sharing arithmetic relation involving a single prime which 
produce full escape of mass. For details see~\cite{KPS}.
\subsection{Acknowledgments} This paper was under writing for quite some time during which it evolved to its present shape. 
This evolution was greatly influenced by conversing with others and for that I am grateful. Thanks are due to Elon Lindenstrauss, Manfred Einsiedler, Shahar Mozes, Barak Weiss, and Ofir David.   
\section{Intuition}\label{section intuition}
As often happens, the idea behind the proofs is simple but it is not unlikely that without clarifications 
it may appear quite hidden behind 
the details. We therefore try to explain it briefly and along the way present some of the notation that we shall use. 
The diagonal group is naturally identified with the hyperplane
$$\bR^d_0\defi\set{\mb{t}\in\bR^d:\sum_1^d t_i=0}.$$ 
As the dimension of this hyperplane will appear frequently in our 
formulas, we  henceforth denote
$$n\defi \dim \bR^d_0=d-1.$$
Given a compact orbit $Ax$, it corresponds in a natural way\footnote{For the sake of the current discussion
the reader might envision $\Del$ as the image under the logarithm of the stabilizer or $x$ but in fact,  
the lattice we consider could potentially be slightly larger. 
See \S\ref{escape and geometry section} for the definition.} 
to a lattice $\Del<\bR^d_0$ and 
on choosing a fundamental domain $F$ (say a parallelepiped) for this lattice, we may identify the orbit with $F$ via
the map $\mb{t}\mapsto a(\mb{t})x$, where 
\eqlabel{eq at}{
a(\mb{t})\defi \diag{e^{t_1},\dots, e^{t_d}}.
}
We
think of $\mb{t}\in\bR^d_0$ as the `time parameter', and as it varies, the point on the orbit corresponding to it 
varies as well. 
If it so happens that $x$ contains a very short vector, say of length $\ell$, at time $0$ (and hence 
at any time from $\Del$),
then one can estimate quite easily a radius $r=r(\ell)$ such that for any time in a ball of radius $r$ around a 
corner of $F$ 
the vector is still short. A strategy for deducing that the orbit spends most
of the time near infinity is to establish that 
the volume of these neighbourhoods occupies most of the volume of $F$. The success of this strategy depends therefore
first and foremost on the relationship between the volume of $F$ and the length $\ell$ but also on the  `geometry' of $F$ 
as is illustrated in Figure~\ref{figure basic idea} below.
\begin{figure}[h]
\centering
\fbox{\includegraphics[scale = 0.15]{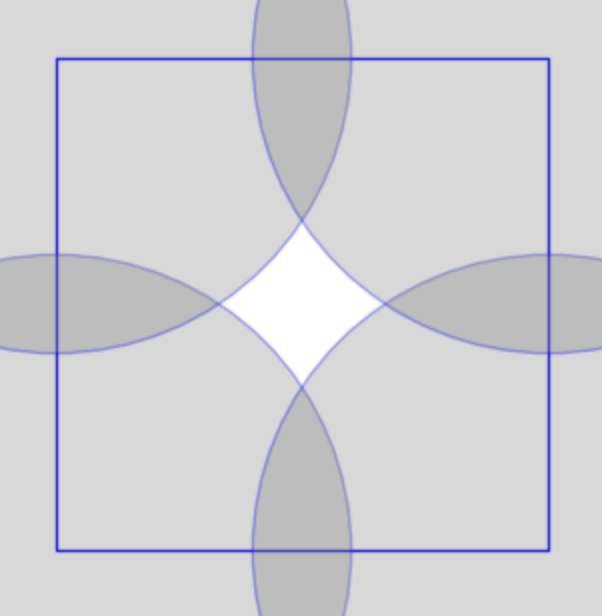}
\includegraphics[scale = 0.20]{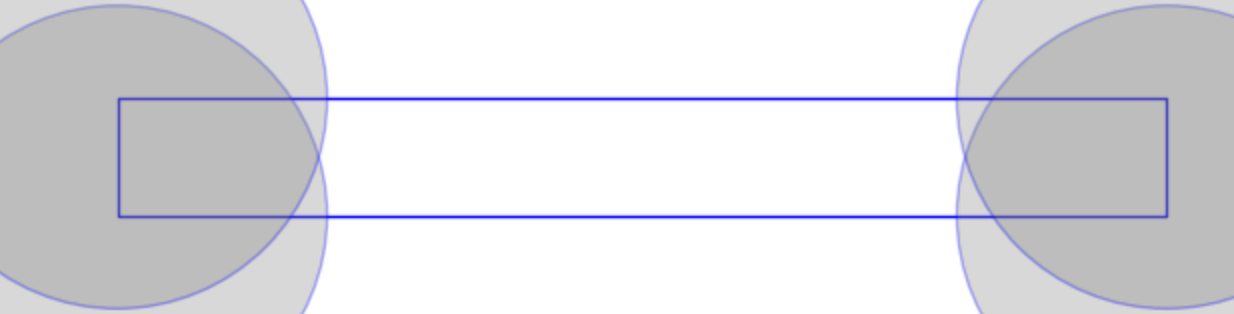}}
\caption{The square on the left is a tamed fundamental domain of area 1 and the grey circles cover most of its area. 
The rectangle on the right is a skewed fundamental domain and the circles fail to cover a significant proportion of it.
The grey areas represents times in which the orbit is near infinity.}\label{figure basic idea}
\end{figure}

This strategy suffices to establish partial escape of mass in certain examples but in order to obtain the sharper 
results of this paper, and in particular, full escape of mass,
one needs to modify it slightly. 

The first modification has to do with the norm. In the above description we mentioned 
an estimate of the radius of a euclidean ball around points of $\Del$ in which we have control on the length of some short vector. It turns out that this estimate becomes sharper when we replace the euclidean norm with a slight modification of it
which takes advantage of the fact that $\bR^d_0$ sits in $\bR^d$ . 
Given $\mb{t}\in\bR^d_0$, we denote 
$$\ceil{\mb{t}}=\max\set{t_i : 1\le i\le d}.$$
Note that there exists a constant $c=c(d)>0$ such that for $\mb{t}\in\bR^d_0$,
 $c^{-1}\norm{\mb{t}}\le \ceil{\mb{t}}\le c\norm{\mb{t}}$. The ball of radius $r$ is then replaced by the set
 $\set{\mb{t}\in\bR^d_0: \ceil{\mb{t}}\le r}$ which as the reader can easily convince himself, is a simplex. Another 
 modification to the above strategy, and which is tightly related to the switch from euclidean balls to simplexes,
  is as follows. Since we have good estimates for the length of vectors when the time parameter is confined to a simplex, it
  will be more convenient not to work with a fundamental domain given by the 
  parallelepiped obtained from a basis of $\Del$, but with a certain simplex which one can cook up
  from this basis. This brings us to a situation where the balls and the rectangles  
  in Figure~\ref{figure basic idea} are replaced respectively by simplexes centered around points of $\Del$ and a simplex of the same type
  which contains a fundamental domain as illustrated in Figure~\ref{figure passing to simplexes} below.
\begin{figure}[h]
\centering
\fbox{
\includegraphics[scale = 0.30]{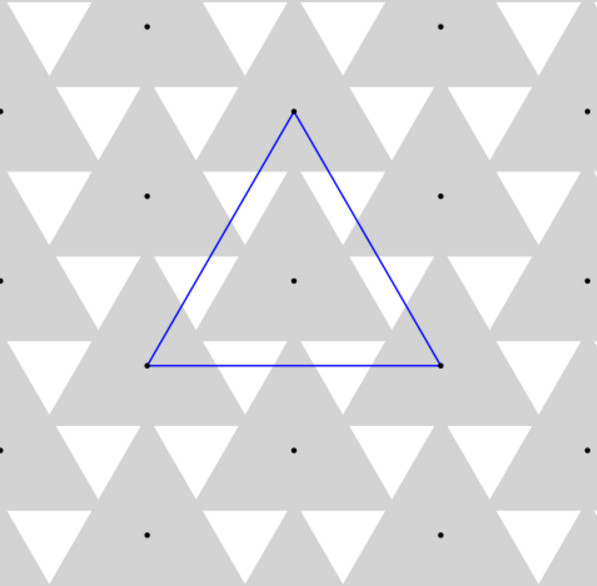}
}
\caption{The black dots are the points of $\Del$, the grey simplexes are the areas where we have control on the short vector, 
and the indicated blue triangle border a simplex which contains a fundamental domain for $\Del$.}
\label{figure passing to simplexes}
\end{figure}  
  
The structure of the paper is as follows. 
In~\S\ref{section simplex sets} we introduce the concept of a \textit{simplex set} $\Phi$ of  a lattice $\Del<\bR^d_0$ 
and various quantities 
attached to it. This section culminates in Proposition~\ref{covering prop} which is fundamental for the analysis. In a nutshell, it 
gives the appropriate mathematics content to the illustration in
Figure~\ref{figure passing to simplexes}.

 In \S\ref{escape and geometry section}
we study lattices with compact orbits of the diagonal group possessing nice simplex sets (which we refer to as $M$-tight simplex sets). 
Having an $M$-tight simplex set allows us translate Proposition~\ref{covering prop} and establish the main results which are Propositions~\ref{eomprop},
\ref{accprop}, and Theorem~\ref{index theorem}. In particular, for these lattices we will control the white area inside the blue simplex
in Figure~\ref{figure passing to simplexes} and show that it is negligible compared to the covolume of $\Del$ 
(see Proposition~\ref{eomprop}\eqref{eq1630}). 
In \S\ref{construction section} we give a construction and carefully analyze it,
showing that the results of \S\ref{escape and geometry section} may be applied. This construction is a slight generalization 
of the one given by Cassels.

\section{Simplex sets and a covering radius calculation}\label{section simplex sets}
\subsection{} 
We begin by fixing some notation and terminology. For concreteness we fix the supremum norm on $\bR^d$ and work only with this norm and the operator norm corresponding to it. 
\begin{comment}
that will be used while discussing lattices in $\bR^d_0\defi\set{\mb{t}\in\bR^d:\sum_1^d t_i=0}$ (which arise naturally when
discussing compact orbits of the diagonal group). Since the dimension of $\bR^d_0$ will appear frequently 
in our discussion we henceforth denote
$$n\defi \dim \bR^d_0=d-1.$$ 
For concreteness we fix the supremum norm on $\bR^d$ and work only with this norm and the operator norm corresponding to it. For $\mb{v}\in\bR^d$
we write
$$\ceil{\mb{v}}=\max\set{v_i : 1\le i\le d}.$$
\end{comment}
By $A\ll B$ we mean that there exists a constant $c$ such that $A\le cB$. If the constant depends on some parameter $M$, we indicate this by writing $\ll_M$. An
exception to this rule is the dependency of the constant in the dimension which we do not record in the notation (thus if not indicated otherwise, implicit constants 
are universal but may depend on $d$). Similar conventions are used with big $O$ and small $o$ notation. We write $A\asymp B$ to indicate that $A\ll B$ and $B\ll A$. For example, as noted in~\S\ref{section intuition}, 
\eqlabel{eq1054}{
\forall \mb{t}\in\bR^d_0,\;\;\norm{\mb{t}}\asymp \ceil{\mb{t}}.
}
\begin{definition}
A \textit{simplex set} $\Phi\subset \bR^d_0$ is a spanning set of cardinality $d$ 
such that $\sum_{\mb{t}\in\Phi}\mb{t}=0$. The \textit{associated simplex} is defined as the convex hull of $\Phi$; $S_\Phi\defi\on{conv}(\Phi)$, and the \textit{associated lattice} is $\Del_\Phi\defi\on{span}_\bZ\Phi$.
\end{definition}
Although the concept of a simplex set
is almost equivalent to a choice of a basis for the associated lattice, it will be more convenient to work with it. 

Similar to the standard lattice and standard basis in euclidean space it will be convenient to introduce a certain ``standard'' simplex set:
Let 
\begin{equation*}
\Phi_*\defi\set{\mb{b}_j}_{j=1}^d,\textrm{ where, }
\mb{b}_j\defi(1,\dots, \underbrace{-n}_{\textrm{\tiny{ $j$'th place}}},\dots,1)^t,
\end{equation*}
and denote $\dsim\defi \Del_{\Phi_*}$. As will be shown in the proof of Lemma~\ref{reduction lemma}, $\Del_*$ is a 
dilated copy of the orthogonal projection of $\bZ^d$ to $\bR^d_0$.
\begin{definition}
Given a simplex set $\Phi$, we define its \textit{distortion map} to be an element $h_\Phi \in \GL(\bR^d_0)$ 
satisfying 
$h_\Phi\Phisym=\av{\Del_\Phi}^{-\frac{1}{n}}\Phi$ 
(and as a consequence $h_\Phi\dsim=\av{\Del_\Phi}^{-\frac{1}{n}}\Del_\Phi$). We define the \textit{distortion} of 
$\Phi$ to be $\norm{h_\Phi}$.
\end{definition}
Note that there is no canonical choice of $h_\Phi$ but the $d!$ possible choices differ by precomposing with a permutation matrix and
thus this ambiguity will cause no harm. 

Most of the assertions regarding a general simplex set $\Phi$ will be proved by establishing them first to $\Phi_*$ and then applying 
the linear map $\av{\Del_\Phi}^{\frac{1}{n}} h_\Phi$ that maps $\Phi_*$ to $\Phi$.

A related quantity to the distortion is the following.
\begin{definition}
Given a simplex set $\Phi$ we set
$$\xi_\Phi\defi \max_{\mb{t}\in\Phi} \ceil{\mb{t}}.$$ 
\end{definition}
We collect a few elementary facts about the concepts introduced above.
\begin{lemma}\label{dist lemma}
\begin{enumerate}
\item\label{dist1}For any simplex set $\Phi$, the distortion of $\Phi$ satisfies $\norm{h_\Phi}\asymp\av{\Del_\Phi}^{-\frac{1}{n}}\xi_\Phi$.
\item\label{dist1.5} If $h_\Phi$ is a distortion map with $\norm{h_\Phi}\le T$ then for any $\mb{t}\in\bR^{d}_0$, 
$\norm{\mb{t}}\ll_T \norm{h_\Phi \mb{t}}$.
\item\label{dist1.6} If $h_\Phi$ is a distortion map with $\norm{h_\Phi}\le T$, then $\xi_\Phi\ll_T \min\set{\norm{\mb{t}}:\mb{t}\in\Phi}$.
\end{enumerate}
\end{lemma}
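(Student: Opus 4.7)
The proof of all three parts is essentially linear algebra built around the single identity $h_\Phi(\mb{b}_j) = \av{\Del_\Phi}^{-1/n}\mb{t}_j$ (for some ordering $\Phi=\{\mb{t}_j\}$). The plan is first to establish \eqref{dist1} directly, then to leverage a volume computation for \eqref{dist1.5}, and finally to deduce \eqref{dist1.6} from the first two parts.

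For \eqref{dist1} I would prove matching upper and lower bounds on $\norm{h_\Phi}$. The lower bound is immediate: $\norm{h_\Phi}\ge \norm{h_\Phi\mb{b}_j}/\norm{\mb{b}_j}=\av{\Del_\Phi}^{-1/n}\norm{\mb{t}_j}/n$, and maximizing over $j$ and invoking \eqref{eq1054} yields $\norm{h_\Phi}\gg \av{\Del_\Phi}^{-1/n}\xi_\Phi$. For the upper bound, observe that any $n=d-1$ of the vectors $\mb{b}_j$ form a basis of $\bR^d_0$; since this basis is fixed (depending only on $d$), every unit vector $\mb{v}\in\bR^d_0$ expands as $\mb{v}=\sum c_j\mb{b}_j$ with $\av{c_j}\ll 1$, so the triangle inequality gives $\norm{h_\Phi\mb{v}}\ll \av{\Del_\Phi}^{-1/n}\max_j\norm{\mb{t}_j}\asymp\av{\Del_\Phi}^{-1/n}\xi_\Phi$.

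For \eqref{dist1.5} the key point is that $\av{\det h_\Phi}$ is a universal constant. Indeed, the covolume of $h_\Phi\Del_*=\av{\Del_\Phi}^{-1/n}\Del_\Phi$ equals $(\av{\Del_\Phi}^{-1/n})^n\av{\Del_\Phi}=1$, whence $\av{\det h_\Phi}=1/\av{\Del_*}$ depends only on $d$. Combined with $\norm{h_\Phi}\le T$ this bounds every entry of the adjugate by $\ll T^{n-1}$, so $\norm{h_\Phi^{-1}}\ll T^{n-1}\av{\det h_\Phi}^{-1}\ll_T 1$. Writing $\mb{t}=h_\Phi^{-1}(h_\Phi\mb{t})$ and applying this bound proves the claim.

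For \eqref{dist1.6} I would simply combine \eqref{dist1} and \eqref{dist1.5}. From \eqref{dist1} together with $\norm{h_\Phi}\le T$ one gets $\xi_\Phi\ll T\av{\Del_\Phi}^{1/n}$. Applying \eqref{dist1.5} to $\mb{b}_j$ and using $\norm{\mb{b}_j}=n$ yields $1\ll_T\norm{h_\Phi\mb{b}_j}=\av{\Del_\Phi}^{-1/n}\norm{\mb{t}_j}$, i.e.\ $\norm{\mb{t}_j}\gg_T\av{\Del_\Phi}^{1/n}$ for every $j$. Multiplying these together gives $\xi_\Phi\ll_T\norm{\mb{t}_j}$ for all $\mb{t}_j\in\Phi$, as desired. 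There is no substantive obstacle here; the only thing demanding care is the bookkeeping between the normalization $\av{\Del_\Phi}^{-1/n}$, the covolume of the dilated lattice, and the distinction between $n$ and $d$.
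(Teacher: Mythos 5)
Your proposal is correct and follows essentially the same route as the paper: part \eqref{dist1} via $\av{\Del_\Phi}^{\frac{1}{n}}\norm{h_\Phi}\asymp\max_j\norm{\mb{t}_j}$ together with \eqref{eq1054}, part \eqref{dist1.5} via the fact that $\det h_\Phi$ is a universal constant (so the adjugate bound controls $\norm{h_\Phi^{-1}}$), and part \eqref{dist1.6} by combining the first two. You merely supply the details the paper labels "straightforward," and your exponent $T^{n-1}$ in \eqref{dist1.5} is a harmless sharpening of the paper's $\norm{h_\Phi}^{n}$.
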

\begin{proof}
\eqref{dist1}. It is straightforward that
 $\av{\Del_\Phi}^{\frac{1}{n}}\norm{h_\Phi}\asymp \max\set{\norm{\mb{t}}:\mb{t}\in \Phi}$ and so the results follows from the definition of $\xi_\Phi$ 
 and~\eqref{eq1054}.

\eqref{dist1.5} Because the determinant of a distortion map $h_\Phi$ is a constant, (independent of $\Phi$), we have 
that $\norm{h_\Phi^{-1}}\ll \norm{h_\Phi}^n$ and the claim follows.

\eqref{dist1.6}. This follows from~\eqref{dist1}, \eqref{dist1.5}.

\end{proof}

\begin{lemma}\label{xi bound}
Let $\Phi$ be a simplex set. Then, $\av{\Del_\Phi}^{\frac{1}{n}}\ll \xi_\Phi.$ 
%Furthermore, if $K$ is a compact set in the space of lattices in $\bR^{n+1}_0$ and
%$\Del_\Phi\in K$ then 
\end{lemma}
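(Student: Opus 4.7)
The plan is to express $|\Del_\Phi|$ as the $n$-dimensional volume of a parallelepiped spanned by vectors in $\Phi$ and then bound this volume by the norms of those vectors via Hadamard's inequality.

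First, write $\Phi=\set{\mb{t}_1,\dots,\mb{t}_d}$. The defining relation $\sum_{i=1}^d \mb{t}_i=0$ expresses $\mb{t}_d$ as $-\sum_{i<d}\mb{t}_i$, so any $d-1=n$ of the vectors in $\Phi$ already span $\bR^d_0$. Since $\bR^d_0$ is $n$-dimensional, such a spanning set is in fact a $\bZ$-basis of $\Del_\Phi$. Consequently $|\Del_\Phi|$ equals the $n$-volume of the fundamental parallelepiped spanned by $\mb{t}_1,\dots,\mb{t}_n$.

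Next, I would apply Hadamard's inequality in the $n$-dimensional inner product space $\bR^d_0$ (equipped with the restriction of the standard Euclidean inner product) to bound this volume by the product of the Euclidean norms,
\[
|\Del_\Phi|\;\le\;\prod_{i=1}^{n}\norm{\mb{t}_i}_2.
\]
Since all norms on $\bR^d$ (and hence on $\bR^d_0$) are equivalent up to constants depending only on $d$, and since~\eqref{eq1054} gives $\norm{\mb{t}}\asymp \ceil{\mb{t}}$ on $\bR^d_0$, we have $\norm{\mb{t}_i}_2\asymp \ceil{\mb{t}_i}\le \xi_\Phi$ for each $i$. Combining these yields $|\Del_\Phi|\ll \xi_\Phi^{\,n}$, and taking $n$-th roots gives the claimed inequality.

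There is no substantive obstacle here; the statement is essentially a Hadamard bound. (An alternative route, had I wanted to stay within the machinery just developed, would be to observe that $h_\Phi$ maps $\Del_*$ onto $|\Del_\Phi|^{-1/n}\Del_\Phi$, so $|\det h_\Phi|=|\Del_*|^{-1}$ is a constant depending only on $d$; then $\norm{h_\Phi}^n\ge |\det h_\Phi|\gg 1$, and Lemma~\ref{dist lemma}\eqref{dist1} immediately yields $\xi_\Phi\gg |\Del_\Phi|^{1/n}$.)
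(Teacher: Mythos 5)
Your argument is correct and is essentially the paper's own proof: the paper also observes that $\Del_\Phi$ is spanned by a basis of vectors of length $\ll\xi_\Phi$ and concludes $\av{\Del_\Phi}\ll\xi_\Phi^n$, leaving the Hadamard step implicit where you spell it out. No gap.
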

\begin{proof}
For each $\mb{t}\in \Phi$ we have that 
$\norm{\mb{t}}\ll\ceil{\mb{t}}\le\xi_\Phi$ so that $\Del_\Phi$ is spanned by a basis of vectors whose lengths are $\ll\xi_\Phi$. It follows that 
$\av{\Del_\Phi}\ll \xi_\Phi^n$. Extracting the $n$'th root we obtain the desired claim.
\end{proof}

\subsection{}
We proceed by introducing a collection of points $\mb{W}_\Phi$ associated to a simplex set $\Phi$ which will play an important 
role in our considerations.
These points are indicated in red on the boundary of the blue triangle in Figure~\ref{figure W phi} below, which represents $\frac{n}{2}S_\Phi$. The black points represent $\Del_\Phi$ and the grey simplexes represent $\Del_\Phi +(1-\rho)\frac{n}{2}S_\Phi$ for some
$0\le \rho\le 1$.
\begin{figure}[h]
\centering
\fbox{
\includegraphics[scale = 0.22]{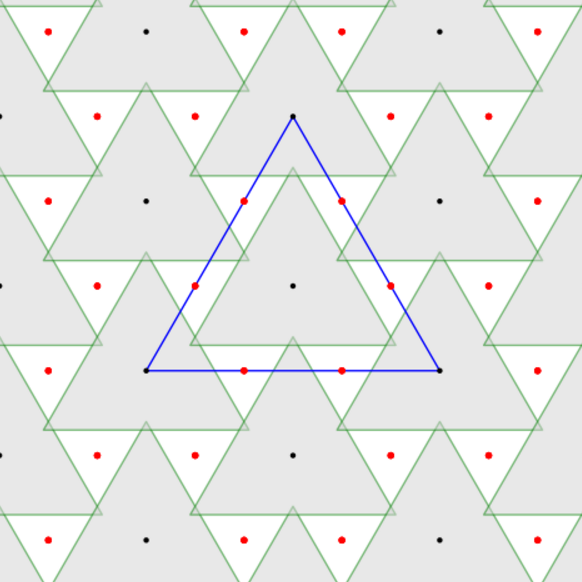}
\includegraphics[scale = 0.22]{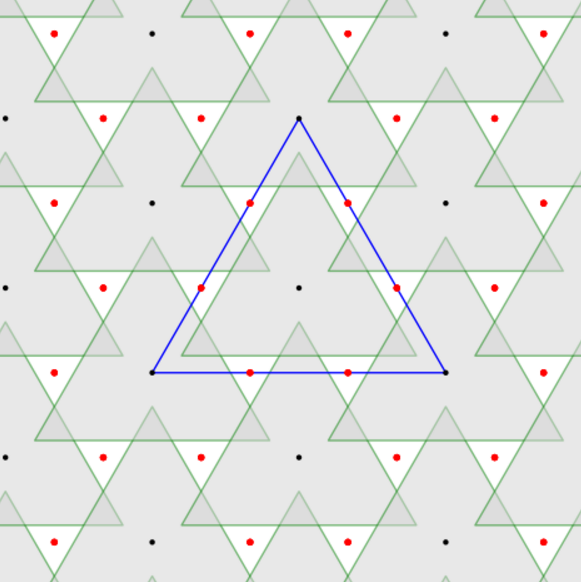}
\includegraphics[scale = 0.22]{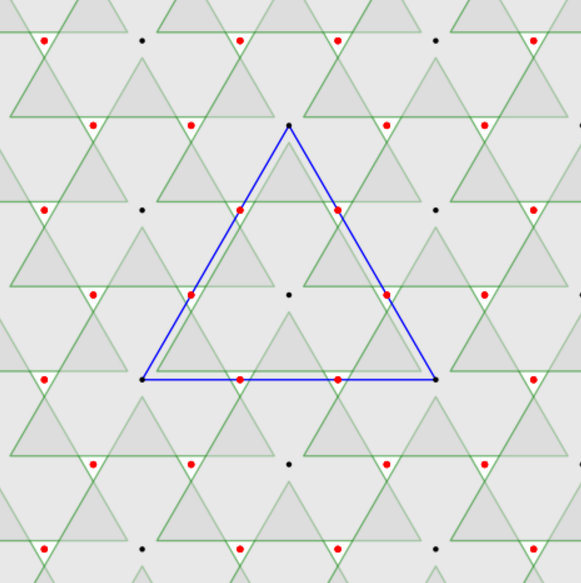}
}
\caption{Envision the grey triangles being dilated simultanuously around their centers. The red points are the critical 
points which are covered last (so called \textit{deep holes}). See Proposition~\ref{covering prop}.}\label{figure W phi}
\end{figure}

Let $\crly{S}$ denote the group of permutation matrices in $\GL_d(\bR)$ and let $\theta\in\crly{S}$ denote the standard cycle permutation; that is, $\theta\mb{e}_i=\mb{e}_{i+1 \on{mod} d}$, where $\set{\mb{e}_i}_{i=1}^d$ denotes the standard basis for $\bR^d$.

\begin{definition}
For any $\tau\in \crly{S}$ let
$$\mb{w}_\tau^{\Phisym}\defi\tau (d^{-1}\sum_{\ell=1}^d (\ell-1) \mb{b}_\ell)$$
and in turn, for a general simplex set $\Phi$ with distortion map $h_\Phi$, let
\eqlabel{def:w}{
\mb{w}_\tau^\Phi\defi \av{\Del_\Phi}^{\frac{1}{n}} h_\Phi \mb{w}_\tau^{\Phisym}.
}
\end{definition}
Note that the indexation in~\eqref{def:w} depends on the choice of the distortion map but this should cause no confusion. We further let $\mb{W}_\Phi\defi \set{\mb{w}^\Phi_\tau}_{\tau\in\crly{S}}$ and choosing a set $\crly{S}'\subset \crly{S}$ of representatives of the quotient $\crly{S}/\idist{\theta}$ we 
set\footnote{The collections $\crly{S}', \mb{W}_\Phi'$ are not canonically defined.} 
$\mb{W}_\Phi'\defi\set{\mb{w}_\tau^\Phi}_{\tau\in\crly{S}'}$. 

When $\Phi$ is understood we usually write $\mb{w}_\tau$. We remark that given a simplex set $\Phi$, the collection $\mb{W}_\Phi$ lies on the boundary of the dilation $\frac{n}{2}S_\Phi$ because $\frac{2}{n}\mb{w}_\tau$ is a convex
combination of the elements in $\Phi$ with one of the coefficients being $0$.
%
\begin{comment}
\begin{figure}[h]
\centering
\fbox{\includegraphics[scale = 0.15]{figure1}}
\fbox{\includegraphics[scale = 0.15]{figure2}}
\fbox{\includegraphics[scale = 0.15]{figure3}}
\caption{This figure illustrates in grey the set $(1-\rho)\frac{n}{2}S_\Phi$ for $\rho = 0.3, 0.1, 0$ respectively. The points of $\Del_\Phi$ are marked in black and the other triangles are translates of the grey one by the lattice points. The red points are the critical points, 
$\mb{W}_\Phi+\Del_\Phi$. What makes them critical is that only when $\rho=0$ the are covered by the translates of the grey triangle (see Proposition~\ref{covering prop}). 
These critical points decompose into $n!$ cosets of $\Del_\Phi$ (see Lemma~\ref{reduction lemma}). See also Figure~\ref{figure2}.}
\label{figure1}
\end{figure}
\end{comment}

%

The following lemma analyzes the reduction of $\mb{W}_\Phi$ modulo $\Del_\Phi$.
\begin{lemma}\label{reduction lemma}
Let $\Phi$ be a simplex set. 
\begin{enumerate}
\item\label{claim4} For $\tau,\sig\in\crly{S}$, $\mb{w}_\tau+\Del_\Phi=\mb{w}_\sig+\Del_\Phi$ if and only if $\tau^{-1}\sig\in\idist{\theta}$.
\item\label{claim5} If $\tau^{-1}\sig\notin\idist{\theta}$ then 
there exists $\tau'\in\crly{S}$ such that for some  $1\le k\le n$,
$\mb{w}_{\tau'}+k(\mb{w}_\tau-\mb{w}_\sig)\notin
\mb{W}_\Phi+\Del_\Phi.$ Moreover, if $d$ is prime then any $\tau'$ satisfies this.
\end{enumerate}
\end{lemma}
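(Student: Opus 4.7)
The plan is to reduce both parts to the model simplex set $\Phi=\Phisym$, since the linear map $\av{\Del_\Phi}^{1/n}h_\Phi$ bijects $\Phisym\to\Phi$, $\dsim\to\Del_\Phi$, and $\mb{w}_\tau^{\Phisym}\mapsto\mb{w}_\tau^{\Phi}$, so it preserves every coset relation at issue. In the model case, using $\mb{b}_j=(1,\dots,1)^t-d\mb{e}_j$ I will compute directly that $(\mb{w}_{\on{id}}^{\Phisym})_j=\frac{d-1}{2}-(j-1)=:a_j$, and consequently $(\mb{w}_\tau^{\Phisym})_j=a_{\pi_\tau^{-1}(j)}$, where $\pi_\tau$ is the permutation with $\tau\mb{e}_i=\mb{e}_{\pi_\tau(i)}$. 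Three further facts will be crucial: $a_k-a_l=l-k$; $\dsim$ is precisely the set of integer vectors in $\bR^d_0$ whose coordinates are pairwise congruent mod $d$ (again from $\mb{b}_j=(1,\dots,1)^t-d\mb{e}_j$); and $\pi_{\theta^c}$ is translation by $c$ on $\bZ/d\bZ$.

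For part~\ref{claim4}, I compute $(\mb{w}_\tau-\mb{w}_\sig)_j=\pi_\sig^{-1}(j)-\pi_\tau^{-1}(j)$; membership in $\dsim$ is then the requirement that this quantity be independent of $j$ modulo $d$, which as an equality of functions on $\bZ/d\bZ$ reads $\pi_\sig^{-1}\circ\pi_\tau=\pi_{\theta^c}$ for some $c$, i.e.\ $\sig^{-1}\tau\in\idist{\theta}$, equivalently $\tau^{-1}\sig\in\idist{\theta}$.

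For part~\ref{claim5}, set $v:=\mb{w}_\tau-\mb{w}_\sig$. The same unpacking shows that $\mb{w}_{\tau'}+kv\in\mb{w}_{\tau''}+\dsim$ iff $\pi_{\tau''}^{-1}(j)\equiv\pi_{\tau'}^{-1}(j)-kv_j+c\pmod d$ for all $j$ and some $c$, and such a permutation $\pi_{\tau''}^{-1}$ exists iff the residues $\pi_{\tau'}^{-1}(j)-kv_j\pmod d$ form a complete residue system. Thus $\mb{w}_{\tau'}+kv\notin\mb{W}_\Phi+\Del_\Phi$ amounts to exhibiting a collision $\pi_{\tau'}^{-1}(j)-\pi_{\tau'}^{-1}(j')\equiv k(v_j-v_{j'})\pmod d$ for some $j\ne j'$. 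By part~\ref{claim4} the $v_j$'s are not all congruent mod $d$, so I fix $j\ne j'$ with $r:=v_j-v_{j'}\not\equiv 0\pmod d$. For general $d$ I will take $k=1$ and choose any $\tau'$ with $\pi_{\tau'}^{-1}(j)-\pi_{\tau'}^{-1}(j')\equiv r\pmod d$ (possible since $r\in\{1,\dots,d-1\}$). For $d$ prime, $r$ is invertible mod $d$ and $\pi_{\tau'}^{-1}(j)-\pi_{\tau'}^{-1}(j')$ is automatically a nonzero residue mod $d$ for any $\tau'$, so $k\equiv(\pi_{\tau'}^{-1}(j)-\pi_{\tau'}^{-1}(j'))r^{-1}\pmod d$ can be chosen in $\{1,\dots,n\}$, yielding the ``moreover'' clause. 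The only real subtlety I anticipate is notational bookkeeping (permutation-matrix conventions, and passing between permutations of $\{1,\dots,d\}$ and $\bZ/d\bZ$); the underlying content is routine modular arithmetic.
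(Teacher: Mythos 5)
Your proof is correct and follows essentially the same route as the paper's: both reduce to $\Phi_*$ via the map $\av{\Del_\Phi}^{1/n}h_\Phi$, characterize $\dsim$ as the integer vectors of $\bR^d_0$ with constant residue mod $d$ and $\mb{W}_{\Phi_*}+\dsim$ as those whose coordinates realize all residues, and then conclude with the same modular collision argument (with $k=1$ and a chosen $\tau'$ in general, and solving for $k$ via invertibility of the difference when $d$ is prime). The only difference is organizational — you keep general $\tau,\sig$ and track a fixed colliding pair of indices, whereas the paper first normalizes $\tau=\on{id}$ — and this does not change the substance.
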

Statement~\eqref{claim4} is illustrated in Figure~\ref{figure W phi} where one can see that the six red points on the boundary of the blue 
simplex decompose into two classes modulo the black points.
Statement~\eqref{claim5} will be used in the proof of Theorem~\ref{index theorem} 
to show that a certain lattice that a priori contains
$\Del_\Phi$ is actually equal to it. This will allow us to prove in~\S\ref{construction section} that certain collections of units in certain
orders generate the corresponding group of units modulo torsion.
\begin{proof}
It suffices to prove the lemma for the standard simplex set $\Phisym$. 
Let us denote by $\on{p}:\bR^d\to \bR^d_0$ the orthogonal projection.
Note that if $\set{\mb{e}_\ell}$ denotes the standard basis for $\bR^d$ then for any $\ell$ and for any $\sig\in\crly{S}$
\begin{align}
\nonumber \mb{b}_\ell &= \on{p}(-d\mb{e}_\ell),\\
\label{eq1750}\dsim &=\on{p}(d\bZ^d),\\
\nonumber \mb{w}_\sig&=\on{p}((\sig(1,\dots, d))^t).
\end{align}
 
We first show that for any $\tau$, $\mb{w}_\tau - \mb{w}_{\tau\comp \theta^j}\in\dsim$. By the above, this is equivalent to
showing  
that the difference $\tau(1,\dots,d)-\tau\comp\theta^j(1,\dots,d)\in d\bZ^d+\ker\on{p}$. Clearly, it is enough to deal 
with the case $j=1$ and moreover, post-composing with $\tau^{-1}$ we may assume $\tau=\on{id}$ (here we are using 
the fact that both $\bZ^d$ and $\ker\on{p}$ are $\crly{S}$-invariant). 
Indeed,  
$$ \smallmat{1\\2\\ \vdots \\d} -\theta \smallmat{1\\2\\ \vdots \\ d}
=\smallmat{1\\1\\ \vdots \\1}-\smallmat{d\\0\\ \vdots \\ 0}\in\ker\on{p}+d\bZ^d.$$

We prove the second part of~\eqref{claim4}. Let $\tau,\sig\in\crly{S}$ be such that $\tau^{-1}\sig\notin\idist{\theta}.$ 
We show that $\mb{w}_\tau-\mb{w}_\sig\notin\dsim$, which is equivalent to $\tau(1,\dots,d)-\sig(1,\dots,d)\notin d\bZ^d+\ker\on{p}$. Post-composing with $\tau^{-1}$ and rewriting $\tau^{-1}\sig$ as $\sig$ we are reduced to the case $\tau=\on{id}$. Moreover, using the already established first part of~\eqref{claim4}, we may replace $\sig$ with $\sig\comp\theta^j$ and we choose $j$ so that $\sig\comp\theta^j$ fixes the $d$'th coordinate. To this end, assume by way of contradiction
the existence of a non-trivial permutation $\sig$ of $1,\dots,n$ such that 
the difference $(1,\dots,n,d)-(\sig(1),\dots,\sig(n),d)$  belongs to 
$d\bZ^d+\ker\on{p}$. Alternatively, all the differences between the coordinates of $(1-\sig(1),\dots,n-\sig(n),0)$ are 
equal modulo $d$. As the last coordinate is $0$ this is equivalent to saying that $(1,\dots,n)=(\sig(1),\dots,\sig(n))$ modulo
$d$. But since this equality involves only numbers between 1 to $n$ it is equivalent to $\sig$ being trivial which contradicts our 
assumption and concludes the proof of~\eqref{claim4}.

The proof of~\eqref{claim5} follows similar lines. First observe that because $\ker\on{p}\cap \bZ^d=\set{k\mb{1}:k\in\bZ}$, where 
$\mb{1}\defi (1,\dots,1)^t$, it follows from~\eqref{eq1750} that $\set{\mb{v}\in\bZ^d:\on{p}(\mb{v})\in\mb{W}_\Phi}$ can be characterized as the set of integer vectors having all residues modulo $d$ as coordinates. 

Similarly to the consideration above we 
reduce to the case where $\tau=\on{id}$ and $\sig$ is non-trivial and  fixes the $d$'th axis. Thus we need to find $\tau'$ and $k$ such that
$\tau'(1,\dots,d)+k(1-\sig(1),\dots, n-\sig(n),0)$ does not contain all the residues modulo $d$ as coordinates. Since $\sig$ is non-trivial we know that for some $1\le j\le n$, $j-\sig(j)\ne 0$. If we choose
$\tau'$ so that $\tau'$ fixes the $d$ axis and also, $\tau'(j) =-(j-\sig(j))$ modulo $d$ then we see that for $k=1$ the above vector will contain the residue $0$ twice and therefore will not contain all residues. If $d$ is assumed to be prime, then
for any $\tau'$ we could solve the equation $\tau'(d) = \tau'(j)+k(j-\sig(j))$ modulo $d$ with some $1\le k\le d$, and then
the above vector will contain the residue $\tau'(d)$ twice yielding the same contradiction. 

We note that when $d$ is not prime it 
 is possible to find $\tau',\tau,\sig$ such that $\tau^{-1}\sig\notin \crly{S}$ but
$\mb{w}_{\tau'}+k(\mb{w}_\tau-\mb{w}_\sig)\in\mb{W}_\Phi+\Del$ for all $k$. For example, 
when  $d=4$ one can take
$\mb{w}_{\tau'}=\mb{w}_\tau = (3,2,1,4), \mb{w}_\sig = (1,2,3,4)$. 
\end{proof}

We now state a covering result which plays a crucial role in the analysis below and shows the significance of the 
dilation $\frac{n}{2}S_\Phi$ and the collection
$\mb{W}_\Phi$ (see Figure~\ref{figure W phi}). This is roughly a restatement of \cite[Lemma 1]{Casselsnforms} but we give the full proof for 
completeness.
\begin{comment}
\begin{figure}[h]
\centering
\fbox{\includegraphics[scale = 0.2]{figure11}}
\fbox{\includegraphics[scale = 0.2]{figure21}}
\caption{In this figure the grey area represents $\Del_\Phi + (1-\rho)\frac{n}{2}S_\Phi$ (for $\rho=0.3,0.1,$ respectively), and the white area represent the complement which equals, as is illustrated, to the union of simplexes centred at the points in 
$\Del_\Phi +\mb{W}_\Phi$ and which have side-length proportional to $\rho$. See also Figure~\ref{figure1}.}
\label{figure2}
\end{figure}
\end{comment}

\begin{proposition}\label{covering prop}
Let $\Phi$ be a simplex.  
\begin{enumerate}
\item\label{claim1} $\bR^d_0= \frac{n}{2} S_\Phi +\Del_\Phi$. 
\item\label{claim2} $\bR^d_0\smallsetminus(\frac{n}{2} S_\Phi^\circ+\Del_\Phi)\subset \mb{W}_\Phi+\Del_\Phi=\mb{W}_\Phi'+\Del_\Phi$, where 
$S^\circ$ denotes the interior of $S$.
\item\label{claim3} There exists a universal constant $c>0$ 
such that for any $\rho\in(0,1)$, 
$\bR^d_0\smallsetminus((1-\rho)\frac{n}{2} S_\Phi+\Del_\Phi)\subset \mb{W}_\Phi' + B_{c\rho\xi_\Phi}+\Del_\Phi.$
\end{enumerate}
\end{proposition}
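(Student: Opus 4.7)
The plan is to reduce all three claims to the case of the standard simplex set $\Phi=\Phi_*$ and then exploit the very explicit coordinate description of $\tfrac{n}{2}S_{\Phi_*}$. The linear isomorphism $L\defi\av{\Del_\Phi}^{1/n}h_\Phi$ of $\bR^d_0$ sends $\Phi_*\mapsto\Phi$, $\dsim\mapsto\Del_\Phi$, $S_{\Phi_*}\mapsto S_\Phi$ (interiors to interiors) and $\mb{W}_{\Phi_*}\mapsto\mb{W}_\Phi$ by the very definitions, which transports \eqref{claim1} and \eqref{claim2} verbatim. For \eqref{claim3} I would use $L(B_r)\subset B_{r\norm{L}}$ together with Lemma~\ref{dist lemma}\eqref{dist1} and the fact that $\xi_{\Phi_*}$ is a universal constant (depending only on $d$) to conclude $\norm{L}\asymp\xi_\Phi$, so that any bound of the form $O(\rho)$ in the standard setting becomes $O(\rho\xi_\Phi)$ after transport.

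For $\Phi_*$, writing $v\in\bR^d_0$ as a convex combination of the vertices $\tfrac{n}{2}\mb{b}_j$ gives the clean identification $\tfrac{n}{2}S_{\Phi_*}=\set{v\in\bR^d_0:\max_i v_i\le n/2}$, with interior given by strict inequality. Unwinding~\eqref{def:w} shows that $\mb{w}_{\on{id}}^{\Phi_*}=(n/2,n/2-1,\ldots,-n/2)$, so $\mb{W}_{\Phi_*}$ is precisely the orbit of this vector under coordinate permutations. Claim~\eqref{claim1} then becomes the classical covering statement that the translates of this simplex by $\dsim=\on{p}(d\bZ^d)$ cover $\bR^d_0$, which I would prove by rounding each coordinate $v_i$ to its nearest element of $v_i+d\bZ$ inside a suitable interval of length $d$ and projecting back to $\bR^d_0$, as in Cassels~\cite{Casselsnforms}.

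For \eqref{claim2} and \eqref{claim3} I would run a uniform deep-hole analysis. Given $v$, use \eqref{claim1} to pick $w\defi v-\lam^*\in\tfrac{n}{2}S_{\Phi_*}$, and let $\pi$ be a permutation with $w_{\pi(1)}\ge\cdots\ge w_{\pi(d)}$. A direct computation shows that, for each $1\le k\le d$, the translate $w+\sum_{j=1}^{k-1}\mb{b}_{\pi(j)}\in v+\dsim$ has maximum coordinate equal to $w_{\pi(k)}+(k-1)$, provided $w_{\pi(k)}\ge -\tfrac{n}{2}-1$ (a condition one verifies must hold once $\rho$ is below a $d$-dependent threshold). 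Hence the hypothesis $v\notin(1-\rho)\tfrac{n}{2}S_{\Phi_*}+\dsim$ yields the $d$ inequalities
\eq{
w_{\pi(k)}+(k-1)>(1-\rho)\tfrac{n}{2},\qquad k=1,\ldots,d,
}
which, combined with $\sum_kw_{\pi(k)}=0$ and the upper bound $w_{\pi(k)}\le n/2$, pin down $|w_{\pi(k)}-(\tfrac{n}{2}-(k-1))|\ll\rho$ uniformly in $k$. Setting $\rho=0$ forces $w=\mb{w}_\pi^{\Phi_*}$ and gives \eqref{claim2}, while the uniform $O(\rho)$ bound---combined with Lemma~\ref{reduction lemma}\eqref{claim4} to pass to $\mb{W}_{\Phi_*}'$---gives \eqref{claim3}. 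For $\rho$ above the threshold one falls back on a crude diameter bound for $\tfrac{n}{2}S_{\Phi_*}$, at the cost of a larger $d$-dependent constant.

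The main obstacle will be the bookkeeping in the deep-hole step: one must verify that as $-\mb{b}_{\pi(1)},\ldots,-\mb{b}_{\pi(k-1)}$ are applied in order, the maximum coordinate really does propagate to index $\pi(k)$ rather than re-appearing at one of the already-pushed-down indices $\pi(j)$, $j<k$. Once this propagation is confirmed, the rest of the argument reduces to routine linear bookkeeping against the zero-sum constraint.
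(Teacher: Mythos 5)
Your reduction to $\Phi_*$, your identification of $\tfrac{n}{2}S_{\Phi_*}$ with $\set{\mb{v}\in\bR^d_0:\ceil{\mb{v}}\le n/2}$, and your deep-hole computation are essentially the paper's argument: the translates $w+\sum_{j<k}\mb{b}_{\pi(j)}$ are exactly the vectors $\mb{u}^{(k)}$ considered there, and the key step is the same summation of $w_{\pi(k)}+(k-1)\ge(1-\rho)\tfrac n2$ against the zero-sum constraint. The one place you genuinely diverge is part \eqref{claim3}: the paper deduces it from \eqref{claim2} via a soft statement about intersections of $d$ half-spaces in general position, whereas you extract the $O(\rho)$ bound directly from the positivity of the slacks $\epsilon_k=w_{\pi(k)}-(\tfrac n2-(k-1))+\rho\tfrac n2$ together with $\sum_k\epsilon_k=d\rho\tfrac n2$; both work, and yours has the small advantage of producing an explicit constant. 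The ``propagation'' issue you flag as the main obstacle is real but easily dispatched: if the maximum of the translate were attained at an already-lowered index $\pi(1)$, then $w_{\pi(1)}+k-1-d>(1-\rho)\tfrac n2$ forces $w_{\pi(1)}>\tfrac n2$ once $\rho<2/n$, contradicting $w\in\tfrac n2 S_{\Phi_*}$; so the max sits at $\pi(k)$ exactly as you need.

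The genuine gap is your proof of \eqref{claim1}, on which \eqref{claim2} and \eqref{claim3} then depend (you use \eqref{claim1} to produce the representative $w$). Rounding each $v_i$ into a single interval of length $d$ and projecting back to $\bR^d_0$ cannot give the sharp constant $n/2$: after projection the maximal coordinate is $\max_i u_i-\bar u$, and even if you first use the pigeonhole/largest-gap trick to fit all lifted coordinates into a window of length $n$, the configuration with one coordinate at the top of the window and the remaining $n$ at the bottom gives $\max_i u_i-\bar u=n^2/d$, which exceeds $n/2$ for every $d\ge3$. The covering radius here is exactly $\tfrac n2$ (see Remark~\ref{equality remark}), so no per-coordinate rounding can close this; one must compare the $d$ rotations $w+\sum_{j<k}\mb{b}_{\pi(j)}$ against each other. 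Fortunately the repair uses only machinery already in your proposal: take $w$ to be a minimizer of $\ceil{\cdot}$ over the coset $v+\dsim$ (such a minimizer exists since $\ceil{\cdot}\asymp\norm{\cdot}$ on $\bR^d_0$ and the coset is discrete); minimality gives $\max(w_{\pi(k)}+k-1,\,w_{\pi(1)}+k-1-d)\ge w_{\pi(1)}$, the second option is strictly smaller than $w_{\pi(1)}$, so $w_{\pi(k)}+k-1\ge w_{\pi(1)}$ for all $k$, and summing yields $dw_{\pi(1)}\le\tfrac{d(d-1)}2$, i.e.\ $\ceil{w}\le\tfrac n2$. This is precisely the paper's descent argument, and with it your treatment of \eqref{claim2} and \eqref{claim3} goes through.
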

As explained in Remark~\ref{equality remark}, the inclusion in~\eqref{claim2} is in fact equality. This shows that the covering radius of $\Del_\Phi$ by $S_\Phi$
is exactly $\frac{n}{2}$. 
\begin{proof}
We prove the proposition for the  standard simplex set  $\Phisym$.
In this case, it follows from the definition (or from~\eqref{eq1750}), that  $\mb{w}_\tau=\tau(\mb{w})$, where $\mb{w}=(n/2,n/2-1,...,-n/2)$. 
The statement for general simplex sets follows by applying the linear map $\av{\Del_\Phi}^{\frac{1}{n}}h_\Phi$ which takes 
$\Phi_*$ to $\Phi$. Indeed, it is clear that 
the statements \eqref{claim1}, \eqref{claim2}, follow immediately and \eqref{claim3} follows from the fact that the 
above linear transformation has norm $\ll\xi_\Phi$ by Lemma~\ref{dist lemma}\eqref{dist1}.

We prove~\eqref{claim1} and \eqref{claim2}.  
One of the benefits of working in the hyperplane $\bR^d_0$ is the following observation: 
\eqlabel{eq1444}{
\frac{n}{2}S_{\Phisym}=\set{\mb{v}\in\bR^d_0: \ceil{ \mb{v}}\le \frac{n}{2}}.
} 
Given $\mb{u}\in\bR^d_0$, if $\ceil{\mb{u}}\ge\frac{n}{2}$ we describe a procedure by which 
\begin{enumerate}[(i)]
\item\label{pos2} either $\mb{u}=\mb{w}_\tau$ for some $\tau\in\crly{S}$,
\item\label{pos1} or we find $\mb{v}\in\dsim$ such that 
 $\ceil{\mb{u}+\mb{v}}<\ceil{\mb{u}}.$ 
\end{enumerate} 
As long as \eqref{pos1} occurs, we replace $\mb{u}$ by
$\mb{u+v}$ and iterate. 
As the set $\dsim+\mb{u}$ is discrete and the sets $\set{\mb{v}\in\bR^d_0: \ceil{\mb{v}}\le t}$ are compact, we cannot iterate this procedure indefinitely, which means that after finitely many iterations we will arrive at a 
vector $\mb{u}'\in\mb{u}+\dsim$ with $\mb{u}'\in \frac{n}{2}S^\circ$, or to one of the points $\mb{w}_\tau$. 
This will conclude the 
proof of~\eqref{claim1} and  \eqref{claim2} (the equality $\mb{W}_{\Phi_*}+\Del_*=\mb{W}'_{\Phi_*}+\Del_*$ follows from Lemma~\ref{reduction lemma}\eqref{claim4}).

The procedure is as follows:
Let $\mb{u}\in\bR^d_0$ be such that $\max \mb{u} \ge \frac{n}{2}$ and assume for concreteness that $u_1\ge u_2\dots \ge u_d$. 
Successively for $k=1,\dots, n$ look at the vector
$$ \mb{u}^{(k)}\defi \mb{u}+\sum_{j=1}^{k} \mb{b}_j =  
\smallmat{u_1\\ \vdots\\ u_k\\ u_{k+1}\\ \vdots \\ u_d}+
\smallmat{-d+k\\ \vdots \\ -d+k\\ k\\ \vdots\\ k}= 
\smallmat{u_1 - (d-k)\\ \vdots\\ u_k - (d-k)\\ u_{k+1}+k\\ \vdots \\ u_{d}+k}.$$
If for 
some $1\le k\le n$ we have that $\max \mb{u}^{(k)}<\max \mb{u}$ then we are done. Otherwise,
by our assumption on the order of the coordinates of $\mb{u}$ we conclude that $\ceil{ \mb{u}^{(k)}}$ must be obtained at its 
$k+1$'th coordinate  $u_{k+1}+k$ and moreover that $u_1\le u_{k+1}+k$. Summing these $n$ inequalities and taking into account that 
$\mb{u}\in\bR^{n+1}_0$ we arrive at $(n+1)u_1\le \frac{n(n+1)}{2}$, and so $u_1\le \frac{n}{2}$. Together with our assumption that
$\ceil{ \mb{u}}\ge \frac{n}{2}$ we conclude that in fact $u_1=\frac{n}{2}$. Thus all the inequalities in the above consideration must be
equalities. In particular, $u_1= u_{k+1}+k$ for all relevant $k$ which implies that $\mb{u}=\mb{w}$ as desired.

Claim \eqref{claim3} follows from \eqref{claim2} and the following elementary statement about intersections of half-spaces: 
Given $d$ linear functionals $\psi_i$ on $\bR^d_0$ and a point $\mb{w}\in\bR^d_0$, consider the half spaces
$$H_i^\rho\defi\set{\mb{v}\in\bR^d_0:\psi_i(\mb{v})\le \psi_i(\mb{w})+\rho}$$ for $\rho\ge 0$. If the $\psi_i$ are in
general position (in fact, if and only if), then the intersection $\cap_1^dH_i^\rho$ is a simplex which is contained in 
the ball $B_{c\rho}(\mb{w})$ for some constant $c$ that depends only on the functionals $\psi_i$ (this constant deteriorates when 
$n$ of these functionals are almost linearly dependent). In the case at hand the linear functionals are those giving rise for the faces 
of $S_{\Phi_*}$ and $\mb{w}$ is a point of $\mb{W}_{\Phi_*}+\Del_*$. In Figure~\ref{figure W phi} these are the white simplexes 
centered at the red points.
\end{proof}
\begin{remark}\label{equality remark}
A nice indirect way to see why there must be equality in Proposition~\ref{covering prop}\eqref{claim2} is as follows. When considering $\Phi_*$, 
since all the objects involved are invariant under $\crly{S}$, it is clear that the established inclusion implies that the left hand side of~\eqref{claim2}
is either empty or there is equality. Assuming it is empty, we deduce the existence of a positive $\rho>0$ such that the left hand side of~\eqref{claim3} is empty
as well. As will be shown in \S\ref{escape and geometry section}, \ref{construction section}, there are compact $A$-orbits whose visits to $X^{\ge\del}$ occur 
only at times $\mb{t}$ belonging to the left hand side of~\eqref{claim3}. We arrive at the desired contradiction as this set is never empty by Theorem~\ref{thm del0}.
\end{remark}

\section{Escape and geometry of compact $A$-orbits}\label{escape and geometry section}
\subsection{}
Denote
\eqlabel{Omega}{
 \Om\defi\set{x\in X_d: Ax \textrm{ is compact}}.\\
}
Let $U$ denote the group of diagonal matrices with $\pm 1$ diagonal entries so that $UA$ is the group of all diagonal 
matrices of determinant $\pm 1$. Let $\vphi:UA\to \bR^d_0$ be the homomorphism given by $\vphi(\diag{a_1,\dots,a_d})=(\log\av{a_1},\dots,\log\av{a_d})^t$. Finally,  for $x\in\Om$ we let\footnote{Note that $\av{\Del_x}$
is (up to a constant factor) the regulator of some order in a totally real number field.
} 
\eqlabel{deltax}{
\Del_x\defi \vphi(\on{stab}_{UA}(x)).}
Note that $\Del_x$ might strictly contain $\log(\on{stab}_A(x))$. 
In practice, $\mb{u}\in\Del_x$ if and only if there is a $\pm 1$ diagonal matrix $u$ such that $ua(\mb{u})x=x$ (see~\eqref{eq at} for the notation), and since $U$ acts 
by isometries on $\bR^d$ and we will only be interested in estimating length of vectors, the presence of $u$ will have no effect. 

The results to be proved are stated in an effective way with respect to the covolume $\av{\Del_x}$ and thus they become meaningful
only when $\av{\Del_x}\to\infty$. That this is indeed the case when $Ax$ varies is a consequence of the basic fact from algebraic 
number theory that says that the regulator of a number field goes to infinity when the number field varies. We take this fact for granted.

A simplex set 
$\Phi\subset \Del_x$ will be referred to as a \textit{simplex set 
for} $x$. Note that we do not assume that $\Phi$ generates $\Del_x$ and in fact, one of our goals (see Theorem~\ref{index theorem}), is to develop
tools that will enable to prove that under certain assumptions $\Phi$ generates $\Del_x$; a significant fact from the number 
theoretical point of view as it allows to compute regulators. 

\subsection{}
In this section, we extract effective information about a compact orbit $Ax$ assuming that (i) $x$ has a very short vector, 
(ii) there exists a simplex set for $x$ satisfying various 
assumptions related to the length of the short vector. 
The results originates from comparing the results of \S\ref{section simplex sets} and the following basic fact
(recall the notation of~\eqref{eq length of vectors}).
\begin{theorem}\label{thm del0}
There exists a universal constant $\del_0>0$ such that for any $x\in X$, $Ax\cap X^{\ge\del_0}\ne\varnothing$.
\end{theorem}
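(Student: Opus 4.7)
The plan is to deduce this from Minkowski's second theorem together with a direct balancing argument using the diagonal group, following the classical reduction-theoretic approach of Mahler.

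Fix $x\in X$ and apply Minkowski's second theorem with respect to the sup norm to produce successive minima $\lambda_1\le\cdots\le\lambda_d$ of $x$ satisfying $\prod_i\lambda_i\asymp 1$, with constants depending only on $d$. Pick linearly independent vectors $v_1,\dots,v_d\in x$ realizing the minima, $|v_i|_\infty=\lambda_i$, and moreover take this basis to be Minkowski-reduced so that the standard reduction inequality holds: any nonzero integer combination $\sum n_iv_i$ has sup norm at least a universal constant times $\lambda_{j_0}$, where $j_0=\max\{i:n_i\ne 0\}$. Since $\det(v_1\mid\cdots\mid v_d)=\pm 1$, the Leibniz expansion of this determinant forces some permutation $\sigma\in S_d$ to satisfy $\prod_i|v_{i,\sigma(i)}|\gtrsim 1/d!$. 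Permutation matrices normalize $A$ and merely relabel $A$-orbits, so I may assume $\sigma=\mathrm{id}$, whence the diagonal entries are nonzero with $\prod_i|v_{i,i}|\asymp 1$.

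With this preparation I define the balancing element $a=\operatorname{diag}(c/|v_{1,1}|,\dots,c/|v_{d,d}|)\in A$ where $c=(\prod_j|v_{j,j}|)^{1/d}\asymp 1$, so that $\det a=1$ and $|(av_i)_i|=c\asymp 1$ for every $i$. To convert these individual lower bounds into a uniform lower bound on $\ell(ax)$, I would strengthen the balancing to also give $|av_i|_\infty\asymp 1$ for all $i$, which yields an upper bound $\lambda_d(ax)\lesssim 1$ on the largest successive minimum; combined with $\prod_i\lambda_i(ax)\asymp 1$ this forces $\lambda_1(ax)=\ell(ax)\gtrsim 1$, producing an explicit universal $\delta_0>0$.

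The main obstacle is precisely this last upgrade. The off-diagonal entries of $v_i$ are only controlled by $|v_{i,j}|\le\lambda_i$, so the ratios $|v_{i,j}|/|v_{j,j}|\lesssim\lambda_i/\lambda_j$ appearing in $av_i$ may blow up when $i>j$, potentially destroying the two-sided bound on $|av_i|_\infty$. Controlling this requires a further coordinate permutation matching the diagonal entries monotonically with the successive minima, combined with the properties of a Minkowski-reduced basis. Since the theorem is classical and the paper cites external references (most notably Mahler's reduction theory), I would ultimately reduce to an off-the-shelf statement from the geometry-of-numbers literature rather than spell out all of these reduction-theoretic details.
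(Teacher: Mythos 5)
The paper does not actually prove Theorem~\ref{thm del0}: it is quoted as a classical fact with references (Siegel, Hlawka, Davenport's note, Margulis's proof in the appendix of \cite{TW}, and \cite{McMullenMinkowski}, \cite{SW2} for the optimal constant). So your final fallback --- citing an off-the-shelf statement --- is consistent with how the paper treats the result. The problem is that everything before that fallback reads as if the theorem were a routine consequence of Minkowski's second theorem plus reduction theory, and it is not: the step you flag as "the main obstacle" is not a reduction-theoretic detail, it is the entire content of the theorem.

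Concretely, your balancing gives $|(av_i)_j|\ll \lambda_i/\lambda_j$, which is bounded only for $i\le j$. No further coordinate permutation can repair this: the vector realizing $\lambda_d$ typically has all $d$ coordinates of size comparable to $\lambda_d$, so whichever coordinate $j$ you assign it to, the entry $|(av_d)_{j'}|$ in some other column $j'$ with $\lambda_{j'}\ll\lambda_d$ blows up. Minkowski reduction does not help either; already for $d=2$ take $v_1=(\eps,\eps)$, $v_2=(\tfrac1{2\eps},-\tfrac1{2\eps})$: this is a reduced basis with $\prod|v_{ii}|=\tfrac12$, yet your $a$ sends $v_2$ to a vector with an entry of order $\eps^{-2}$. (In that example one can re-reduce $av_2$ modulo $av_1$ and recover boundedness, but whether this always succeeds in general dimension is precisely the assertion being proved, so invoking it is circular.) The known proofs all inject an ingredient absent from your sketch: either a continuity/connectedness argument over the group $A$ --- the set of $a$ for which a fixed proper subspace carries all $\del$-short vectors of $ax$ is open, and these sets cannot cover the connected set $A$ --- which is Margulis's argument in \cite{TW}, or the Siegel--Hlawka mean-value approach to Mordell's inverse problem. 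Either cite the theorem outright (as the paper does) or supply one of these arguments; the Minkowski-plus-balancing skeleton alone does not close.
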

The proof of Theorem~\ref{thm del0} goes back to the solution of Mordell's inverse problem by Siegel and by
Hlawka~\cite{Hlawka-inverse-problem} (see also~\cite{DavenportNote}). The largest possible value of $\del_0$ 
(which changes with the choice of norm on $\bR^d$) is an interesting quantity in the geometry of numbers. A proof
of Margulis to Theorem~\ref{thm del0} may be found in the appendix of~\cite{TW} and recently it was proved 
in~\cite{McMullenMinkowski}, \cite{SW2}, that for the euclidean norm one can take $\del_0\ge 1$.

We begin with interpreting Proposition~\ref{covering prop}\eqref{claim1} in this context.
\begin{lemma}\label{lemma orbit}
Let $x\in\Om$ and $\Phi$ be a simplex set for $x$. Then $Ax\subset U\set{a(\mb{t}x):\mb{t}\in \frac{n}{2}S_\Phi}.$
\end{lemma}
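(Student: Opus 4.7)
The plan is to turn the covering statement $\bR^d_0 = \frac{n}{2}S_\Phi + \Del_\Phi$ from Proposition~\ref{covering prop}\eqref{claim1} into the claimed orbit description by lifting the additive decomposition through the exponential parametrization $\mb{s}\mapsto a(\mb{s})$. Any element of $A$ can be written uniquely as $a(\mb{s})$ for some $\mb{s}\in\bR^d_0$, since $A$ consists of diagonal matrices with positive entries and determinant $1$. So every point of $Ax$ has the form $a(\mb{s})x$.

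First I would apply Proposition~\ref{covering prop}\eqref{claim1} to write $\mb{s}=\mb{t}+\mb{u}$ with $\mb{t}\in\frac{n}{2}S_\Phi$ and $\mb{u}\in\Del_\Phi$. Since by assumption $\Phi\subset\Del_x$, and $\Del_x$ is a subgroup of $\bR^d_0$, we have $\mb{u}\in\Del_x=\vphi(\on{stab}_{UA}(x))$. Hence there exists $u_0\in U$ such that $u_0\, a(\mb{u})\in\on{stab}_{UA}(x)$, i.e.
\eq{u_0\, a(\mb{u})\,x = x, \qquad \text{equivalently} \qquad a(\mb{u})\,x = u_0^{-1} x.}

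Next, I would exploit the fact that $a(\mb{t})$ and $u_0^{-1}$ are both diagonal and therefore commute. Combining the two observations gives
\eq{a(\mb{s})\,x = a(\mb{t})\,a(\mb{u})\,x = a(\mb{t})\,u_0^{-1}\,x = u_0^{-1}\,a(\mb{t})\,x,}
which exhibits $a(\mb{s})x$ as an element of $U\set{a(\mb{t})x:\mb{t}\in\frac{n}{2}S_\Phi}$, as required.

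There is no real obstacle here: the content of the lemma is entirely carried by Proposition~\ref{covering prop}\eqref{claim1}, and the only small subtlety is that $\Del_x$ is defined using the stabilizer inside $UA$ rather than inside $A$, which forces the auxiliary sign matrix $u_0$ to appear in the final expression. This is precisely why the statement is phrased with a $U$-factor on the right-hand side.
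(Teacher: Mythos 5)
Your proof is correct and follows exactly the paper's argument: decompose the time parameter via Proposition~\ref{covering prop}\eqref{claim1} and absorb the $\Del_x$-part into the stabilizer, picking up a sign matrix $u_0\in U$ because $\Del_x$ is defined via $\on{stab}_{UA}(x)$ rather than $\on{stab}_A(x)$. Nothing to add.
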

\begin{proof}
Given any $\mb{t}\in\bR^d_0$, by Proposition~\ref{covering prop}\eqref{claim1} there exists $\mb{u}\in\Del_x$
with $\mb{t}'\defi \mb{t}-\mb{u}\in\frac{n}{2}S_\Phi$ and so $a(\mb{t})x=a(\mb{t}')a(\mb{u})x=ua(\mb{t}')x$ for some $u\in U$.
\end{proof}
Recall the notation of~\eqref{eq length of vectors}. The following observation gives a lower bound for $\ell(x)$ in terms of the geometry of a simplex set.
\begin{proposition}\label{length bound prop}
Let $x\in\Om$ and let $\Phi\subset \Del_x$ be a simplex set. Then, $e^{-\frac{n}{2}\xi_\Phi}\ll \ell(x)$.
\end{proposition}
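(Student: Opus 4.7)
The plan is to combine Theorem~\ref{thm del0} with Lemma~\ref{lemma orbit} to compare $x$ with a lattice on its orbit that is known to be ``thick'' (i.e.\ lies in $X^{\ge \del_0}$), and then control the distortion caused by the comparison in terms of $\xi_\Phi$. The whole argument reduces to a one-line length estimate once the right time parameter is identified; the only content is choosing that parameter correctly.

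First, I would invoke Theorem~\ref{thm del0} to produce some $a_0\in A$ with $a_0 x\in X^{\ge\del_0}$, write $a_0=ua(\mb{s})$ with $u\in U$, $\mb{s}\in\bR^d_0$, and then use Lemma~\ref{lemma orbit} together with the fact that $U$ acts by isometries on $\bR^d$ to replace $\mb{s}$ by an element of $\frac{n}{2}S_\Phi$ without changing $\ell(a(\mb{s})x)$. Concretely, after this step we have $\mb{s}\in\frac{n}{2}S_\Phi$ with
\begin{equation*}
\ell(a(\mb{s})x)\ge \del_0.
\end{equation*}

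Next, for any nonzero $\mb{v}\in x$ the vector $a(\mb{s})\mb{v}$ lies in $a(\mb{s})x$, so using the supremum norm fixed in~\S\ref{section simplex sets},
\begin{equation*}
\norm{a(\mb{s})\mb{v}}=\max_i e^{s_i}\av{v_i}\le e^{\ceil{\mb{s}}}\norm{\mb{v}},
\end{equation*}
which immediately gives $\norm{\mb{v}}\ge e^{-\ceil{\mb{s}}}\norm{a(\mb{s})\mb{v}}\ge e^{-\ceil{\mb{s}}}\del_0$ and therefore $\ell(x)\ge e^{-\ceil{\mb{s}}}\del_0$. It remains to bound $\ceil{\mb{s}}$. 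Since $\mb{s}\in\frac{n}{2}S_\Phi$, it is a convex combination $\mb{s}=\sum_{\mb{t}\in\Phi}\lambda_{\mb{t}}\,\tfrac{n}{2}\mb{t}$; for each coordinate $j$ one has $(\tfrac{n}{2}\mb{t})_j\le \tfrac{n}{2}\ceil{\mb{t}}\le \tfrac{n}{2}\xi_\Phi$, and taking the convex combination then the maximum over $j$ yields $\ceil{\mb{s}}\le \tfrac{n}{2}\xi_\Phi$. Combining these inequalities gives the desired bound $\ell(x)\gg e^{-\frac{n}{2}\xi_\Phi}$, with implicit constant $\del_0$.

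The only subtle point is the first step, where one must be careful that passing from the $A$-orbit representative supplied by Theorem~\ref{thm del0} to a representative with parameter in $\frac{n}{2}S_\Phi$ (via Lemma~\ref{lemma orbit}) goes through elements of $UA$ rather than $A$; this is harmless precisely because $\Del_x$ was defined via $\on{stab}_{UA}(x)$ in~\eqref{deltax} and because $U$ acts by isometries, so $\ell$ is invariant under the $U$-ambiguity. Apart from this bookkeeping, no further obstacle is anticipated.
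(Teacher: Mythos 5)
Your proof is correct and follows essentially the same route as the paper: Theorem~\ref{thm del0} plus Lemma~\ref{lemma orbit} produce a time $\mb{s}\in\frac{n}{2}S_\Phi$ at which the orbit is $\del_0$-thick, and the expansion of $a(\mb{s})$ is bounded by $e^{\ceil{\mb{s}}}\le e^{\frac{n}{2}\xi_\Phi}$. The paper packages this last estimate via convexity of the exponential applied to the shortest vector of $x$, whereas you bound $\ceil{\mb{s}}$ directly by linearity of the convex combination in each coordinate; the two computations are interchangeable and yield the same constant $\del_0$.
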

\begin{proof}
Let $v\in x$ attain the length $\ell(x)$ 
and let $\mb{t}\in\frac{n}{2}S_\Phi$. We give an upper bound
for $\norm{a(\mb{t})v}$. Write
$\mb{t}=\frac{n}{2}\sum_1^{n+1} \al_j\mb{t}_j$ with $\al_j$ coefficients of a convex combination.
Using the definition of $\xi_\Phi$ and the convexity of the exponential map we get
\begin{align}\label{eq238}
\nonumber\norm{a(\mb{t})v}&\le\ell(x) \max_i \pa{e^{\sum_j \al_jt_{ij}}}^{\frac{n}{2}}\\
&\le \ell(x)\max_i \pa{\sum_j\al_j e^{t_{ij}}}^{\frac{n}{2}}\le  \ell(x)e^{\frac{n}{2}\xi_\Phi}.
\end{align}
Theorem~\ref{thm del0} and Lemma~\ref{lemma orbit} imply that there exist $u\in U$ and $\mb{t}\in \frac{n}{2}S_\Phi$ 
such that $\del_0\le\norm{ua(\mb{t})v}=\norm{a(\mb{t})v}$. This together with \eqref{eq238} gives $ e^{-\frac{n}{2}\xi_\Phi}\ll\ell(x)$ as desired.
\end{proof}
Motivated by Proposition~\ref{length bound prop} we make the following definition.
\begin{definition}\label{defomegaM}
For $M\ge1$ say that a simplex set $\Phi\subset \Del_x$ is $M$-\textit{tight} for $x$, if $\ell(x)\le M e^{-\frac{n}{2}\xi_\Phi}$. 
We let
\begin{align}\label{OmegaM}
\Om_M\defi\set{x\in \Om:\textrm{ there exists an $M$-tight simplex set for $x$}}.
\end{align}

\end{definition}
The following reinterprets Proposition~\ref{covering prop} in this context and establishes an effective escape of 
mass statement for lattices in $\Om_M$ in terms of the regulator $\av{\Del_x}$. It is the tool with which one can 
prove Theorem~\ref{t.0901} directly and not as a corollary to Theorem~\ref{t.0932}.
\begin{proposition}\label{eomprop}
Fix $M>1$ and  $\ka\in(0,1)$ and for $x\in \Om_M$ with an $M$-tight simplex set $\Phi$, set
\begin{align}\label{functions}
\nonumber 
&\del_\ka(x)=\del \defi Me^{-\frac{n}{2}\av{\Del_x}^{\frac{\ka}{n}}};\\
&\rho_\ka(x)=\rho\defi\frac{\av{\Del_x}^{\frac{\ka}{n}}}{\xi_\Phi} \ll\av{\Del_x}^{-\frac{1-\ka}{n}};\\
\nonumber
&r_\ka(x)=r\defi c\av{\Del_x}^{\frac{\ka}{n}} \textrm{ ($c$ being as in Proposition~\ref{covering prop})}.
\end{align}
Then,
\begin{enumerate}
\item\label{eq1151}
$\set{a(\mb{t})x : \mb{t}\in\frac{(1-\rho)n}{2}S_\Phi+\Del_\Phi}\subset X^{<\del}$.
\item\label{eq1651}
$\set{\mb{t}\in\bR^{n+1}_0:a(\mb{t}x)\in X^{\ge \del}} \subset \mb{W}'_\Phi+B_{r}+\Del_\Phi$.
%\item\label  $Ax\cap X^{\ge \del}\subset\set{a(\mb{t})x : \mb{t}\in \bigcup_{\tau\in\crly{S}} B_{r_x}(\mb{w}_\tau)}$.
\item\label{eq1630} $\mu_{Ax}(X^{\ge\del})\ll \av{\Del_{x}}^{-1+\ka}$.
\end{enumerate}
\end{proposition}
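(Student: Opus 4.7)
I prove the three parts in order. Part \eqref{eq1151} is a sharpening of Proposition~\ref{length bound prop}: pick a short vector $v\in x$ with $\norm{v}=\ell(x)\le Me^{-n\xi_\Phi/2}$ and repeat the convex combination argument of~\eqref{eq238}. Writing $\mb{t}\in\frac{(1-\rho)n}{2}S_\Phi$ as $\frac{(1-\rho)n}{2}\sum_j\al_j\mb{t}_j$ and using convexity of the exponential yields $\norm{a(\mb{t})v}\le\ell(x)e^{(1-\rho)n\xi_\Phi/2}\le Me^{-n\rho\xi_\Phi/2}=\del$, the last equality being the definition $\rho\xi_\Phi=\av{\Del_x}^{\ka/n}$. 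For a translation $\mb{u}\in\Del_\Phi\subseteq\Del_x$ there exists $u\in U$ with $ua(\mb{u})x=x$; since $U$ acts on $\bR^d$ by $\pm1$ sign flips it preserves the supremum norm, so the bound persists for $a(\mb{t}+\mb{u})x$.

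Part \eqref{eq1651} is the contrapositive of \eqref{eq1151} combined with Proposition~\ref{covering prop}\eqref{claim3}: if $a(\mb{t})x\in X^{\ge\del}$ then $\mb{t}\notin\frac{(1-\rho)n}{2}S_\Phi+\Del_\Phi$, so $\mb{t}\in\mb{W}'_\Phi+B_{c\rho\xi_\Phi}+\Del_\Phi=\mb{W}'_\Phi+B_r+\Del_\Phi$ using $c\rho\xi_\Phi=c\av{\Del_x}^{\ka/n}=r$.

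For \eqref{eq1630}, identify $Ax$ with the torus $\bR^d_0/L$ where $L=\log\on{stab}_A(x)$; this is a sublattice of $\Del_x$ with index dividing $|U|=2^d$, and $\mu_{Ax}$ corresponds to normalized Lebesgue measure under this identification. Let $B=\set{\mb{t}:a(\mb{t})x\in X^{\ge\del}}$. Since $X^{\ge\del}$ is $U$-invariant and $a(\mb{t}+\mb{u})x$ differs from $a(\mb{t})x$ by a $U$-element for any $\mb{u}\in\Del_x$, the set $B$ is $\Del_x$-invariant. Accounting for the indices between $L$, $\Del_\Phi$, and $\Del_x$ (each a sublattice of $\Del_x$ of finite index), the normalization reduces to
$$\mu_{Ax}(X^{\ge\del})=\frac{\on{vol}(B\cap F_\Phi)}{\av{\Del_\Phi}},$$
where $F_\Phi$ is a fundamental domain for $\Del_\Phi$. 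By \eqref{eq1651} and the coset count from Lemma~\ref{reduction lemma}\eqref{claim4} (the $n!$ elements of $\mb{W}'_\Phi$ represent distinct cosets of $\Del_\Phi$), the numerator is at most $n!\cdot\on{vol}(B_r)\ll r^n=c^n\av{\Del_x}^\ka$. Combined with the sublattice inequality $\av{\Del_\Phi}\ge\av{\Del_x}$, this gives the claimed $\mu_{Ax}(X^{\ge\del})\ll\av{\Del_x}^{-1+\ka}$.

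The step I expect to be most delicate is the measure bookkeeping in \eqref{eq1630}: translating the Haar measure on $Ax$, which naturally lives on $\bR^d_0/L$, into a clean Lebesgue estimate on $\bR^d_0/\Del_\Phi$ requires careful tracking of the indices $[\Del_x:L]$ and $[\Del_x:\Del_\Phi]$ and the $\Del_x$-invariance of $B$. The other two parts are essentially short reductions to the framework already set up in \S\ref{section simplex sets}.
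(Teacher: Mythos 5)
Your proof is correct and follows essentially the same route as the paper's: the convexity estimate of~\eqref{eq238} with the dilation factor $(1-\rho)\frac{n}{2}$ for part~\eqref{eq1151}, the contrapositive combined with Proposition~\ref{covering prop}\eqref{claim3} for part~\eqref{eq1651}, and the bound $\mu_{Ax}(X^{\ge\del})\ll r^n/\av{\Del_x}$ for part~\eqref{eq1630}. The only difference is that you spell out the measure normalization across the lattices $L\le\Del_x$ and $\Del_\Phi\le\Del_x$, which the paper disposes of in a footnote on the index $[\Del_x:\log\on{stab}_A(x)]\le 2^d$; your bookkeeping there is accurate.
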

\begin{remark}
Hereafter we use the symbols $\del,\rho,r$ as in~\eqref{functions} recording implicitly the dependence on $x,\ka$.
\end{remark}
\begin{proof}
Let $x\in\Om_M$ and $\Phi$ an $M$-tight simplex set for $x$.   
Note that by Lemma~\ref{xi bound}, 
$\rho\ll\av{\Del_x}^{-\frac{1-\ka}{n}}$.

We prove~\eqref{eq1151}.  By the discussion following~\eqref{deltax} it suffices to prove 
$\set{a(\mb{t})x : \mb{t}\in\frac{(1-\rho)n}{2}S_\Phi}\subset X^{<\del}$.
Let
$\mb{t}=(1-\rho)\frac{n}{2}\sum_1^{n+1} \al_j\mb{t}_j\in (1-\rho)\frac{n}{2}S_\Phi$ (with $\al_j$ coefficients of a convex combination).
Let $v\in x$ be a vector attaining $\ell(x)$. Similarly to \eqref{eq238}
we get
\begin{align}\label{eq05}
\nonumber \norm{a(\mb{t})v}&\le \ell(x)\max_i \pa{e^{\sum_j \al_jt_{ij}}}^{(1-\rho)\frac{n}{2}}
\le Me^{-\frac{n}{2}\xi_\Phi}\max_i \pa{\sum_j\al_j e^{t_{ij}}}^{(1-\rho)\frac{n}{2}} \\
&\le M e^{-\frac{n}{2}\xi_\Phi}\pa{e^{\xi_\Phi}}^{(1-\rho)\frac{n}{2}}= Me^{-\frac{n}{2}\av{\Del_x}^{\frac{\ka}{n}}}=\del_\ka(x),
\end{align}
which shows that $a(\mb{t}x)\in X^{<\del}$.

We prove~\eqref{eq1651}. By~\eqref{eq1151} we have that 
$$\set{\mb{t}\in\bR^{n+1}_0:a(t)x\in X^{\ge\del}}\subset \bR^{n+1}_0\smallsetminus\pa{(1-\rho)\frac{n}{2}S_\Phi+\Del_\Phi},$$ 
and by
Proposition~\ref{covering prop}\eqref{claim3},
$$\bR^{n+1}_0\smallsetminus\pa{ (1-\rho)\frac{n}{2}S_\Phi+\Del_\Phi}\subset \pa{\mb{W}'_\Phi+B_{c\rho\xi_\Phi}+\Del_\Phi},$$
which implies~\eqref{eq1651} as $r=c\rho\xi_\Phi$.

%\eqlabel{eqballs}{
%Ax\cap X^{\ge \del}\subset\set{a(\mb{t})x : \mb{t}\in \bigcup_{\tau\in\crly{S}} B_{c'\rho \xi_\Phi}(\mb{w}_\tau)},
%}
%which is the statement in \eqref{eq1651} with $r_x\defi c'\av{\Del_x}^{\frac{\ka}{n}}$ (for our choice $\rho=\frac{\av{\Del_x}
%^{\frac{\ka}{n}}}{\xi_\Phi}$). 
Statement~\eqref{eq1630} now follows from~\eqref{eq1651}:
\footnote{Note that the index of $\log(\on{stab}_A(x))$ in $\Del_x$ is
bounded by $2^d.$.}
$$\mu_{Ax}( X^{\ge \del})\ll r^n/\av{\Del_x}\ll \av{\Del_x}^{-1+\ka}.$$
\end{proof}
The following result is the one that we use in order to prove Theorem~\ref{t.0932}. It assumes that we have accurate knowledge regarding the position of the orbit $Ax$ at the special times $\mb{W}_\Phi$ and concludes that the orbit collapses in an effective manner on the orbit $A\bZ^d$. In order to state it we need the following definition:
\begin{definition}\label{omega epsilon c}
For $\eps>0, M,C>1$ we denote by 
$\Om_M(\eps,C)$ the set of $x\in \Om_M$ such that there exists an $M$-tight simplex set $\Phi$ for $x$ with respect to which, for any $\tau\in\crly{S}$
\eqlabel{eq1156}{
a(\mb{w}_\tau)x = g_\tau a(\mb{s}_\tau)\bZ^{n+1} .
} 
where $g_\tau$ satisfies $\norm{g_\tau - I}\le Ce^{-\av{\Del_x}^\eps}$ and $s_\tau\in\bR^d_0$.
\end{definition}
\begin{proposition}\label{accprop}
Fix $M,C>1$ and $\eps>0$. 
Then, for any $\ka<n\eps$ there exists $C'>0$ such that for any $x\in\Om_M(\eps,C) $, 
$$\forall y\in Ax\cap X^{\ge\del},\;  \on{d}(y,A\bZ^d)\le C'e^{-\frac{1}{2}\av{\Del_x}^\eps}.$$
\end{proposition}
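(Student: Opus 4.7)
The plan is to combine the time-localization Proposition~\ref{eomprop}\eqref{eq1651} (every visit of $Ax$ to $X^{\ge\del}$ occurs in a ball $B_r$ around some $\mb{w}_\tau+\Del_\Phi$) with the position-localization hypothesis~\eqref{eq1156} (at the critical times $\mb{w}_\tau$, the orbit is exponentially close to $A\bZ^d$). The strict inequality $\ka<n\eps$ will mediate between the spatial uncertainty of size $r=c|\Del_x|^{\ka/n}$ and the hypothesized closeness rate $e^{-|\Del_x|^\eps}$, once the former is inflated through conjugation of the latter by $a(\mb{b})$.

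First I would take any $y=a(\mb{t})x\in Ax\cap X^{\ge\del}$ and use Proposition~\ref{eomprop}\eqref{eq1651} to decompose $\mb{t}=\mb{w}_\tau+\mb{b}+\mb{u}$ with $\tau\in\crly{S}'$, $\|\mb{b}\|\le r$, and $\mb{u}\in\Del_\Phi\subset\Del_x$. The definition of $\Del_x$ furnishes $u\in U$ with $a(\mb{u})x=u^{-1}x$. Since all diagonal matrices commute and $u^{-1}$ preserves $\bZ^d$, the hypothesis~\eqref{eq1156} rewrites
\eq{y \;=\; u^{-1}\, a(\mb{b})\, a(\mb{w}_\tau)\, x \;=\; u^{-1}\, a(\mb{b})\, g_\tau\, a(\mb{s}_\tau)\, \bZ^d.}

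The natural comparison point is $z\defi a(\mb{b}+\mb{s}_\tau)\bZ^d\in A\bZ^d$, which equals $u^{-1}a(\mb{b})a(\mb{s}_\tau)\bZ^d$ since $u^{-1}\bZ^d=\bZ^d$. Writing $y = h\cdot z$ with $h=u^{-1}a(\mb{b})g_\tau a(-\mb{b})u$, one has $\on{d}(y,A\bZ^d)\le \on{d}(y,z)\ll\|h-I\|$ in the natural metric on $X$. Conjugation by $u\in U$ is an isometry on matrices in the supremum operator norm, so $\|h-I\|=\|a(\mb{b})g_\tau a(-\mb{b})-I\|$, and conjugation by $a(\mb{b})$ multiplies the $(i,j)$ entry by $e^{b_i-b_j}$ with $|b_i-b_j|\le 2\|\mb{b}\|\le 2r$. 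Combined with $\|g_\tau-I\|\le Ce^{-|\Del_x|^\eps}$, this yields
\eq{\on{d}(y,A\bZ^d) \;\ll\; C\, e^{\,2c|\Del_x|^{\ka/n}-|\Del_x|^\eps}.}

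Finally, since $\ka/n<\eps$, the exponent $2ct^{\ka/n}-\tfrac{1}{2}t^\eps$ is bounded above on $t>0$, and absorbing that uniform bound into an enlarged constant $C'$ gives the claimed inequality $\on{d}(y,A\bZ^d)\le C' e^{-\frac{1}{2}|\Del_x|^\eps}$. The main subtlety to handle carefully is the interplay with $U$: the lattice $\Del_x$ records stabilizers in $UA$ rather than $A$, so each $\mb{u}\in\Del_\Phi$ produces an auxiliary $u$-factor; fortunately $u\in U$ commutes with every diagonal matrix, preserves $\bZ^d$, and acts isometrically on matrices, so it cancels out of every quantity of interest.
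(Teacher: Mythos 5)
Your proof is correct and follows essentially the same route as the paper: localize the visit time near some $\mb{w}_\tau$ via Proposition~\ref{eomprop}\eqref{eq1651}, conjugate $g_\tau$ by the bounded displacement (which inflates $\norm{g_\tau-I}$ by at most $e^{2r}$), and use $\ka<n\eps$ to absorb $e^{2c\av{\Del_x}^{\ka/n}}$ into $C'e^{\frac{1}{2}\av{\Del_x}^{\eps}}$. Your treatment is in fact slightly more scrupulous than the paper's, which implicitly reduces to $\mb{t}\in\mb{w}_\tau+B_r$ without spelling out the $\Del_\Phi$-translate and the accompanying $U$-factor that you handle explicitly.
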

\begin{proof}
Let $x\in\Om_M(\eps,C)$ and let $\Phi$ be an $M$-tight simplex set as in Definition~\ref{omega epsilon c}. By Proposition \ref{eomprop}\eqref{eq1651} 
for any point $y\in Ax\cap X_{n+1}^{\ge\del}$ there exists $\tau\in\crly{S}$ such that $y=a(\mb{t}+\mb{w}_\tau)x$ 
for some $\mb{t}\in 
\mb{w}_\tau+B_r$. By~\eqref{eq1156} it follows that 
\eq{
y=a(\mb{t})a(\mb{w}_\tau)x=a(\mb{t})g_\tau a(-\mb{t})a(\mb{t}+\mb{s}_\tau)\bZ^{n+1}
}
and so the distance between $y$ and $A\bZ^{n+1}$ is 
$\le \on{d}(I, a(\mb{t})g_\tau a(-\mb{t}))\le  Ce^{2r_x-\av{\Del_x}^\eps}=Ce^{2 c\av{\Del_x}^{\frac{\ka}{n}}-\av{\Del_x}^\eps}=
Ce^{(-1+2c\av{\Del_x}^{\frac{\ka}{n}-\eps})\av{\Del_x}^\eps}$,
and if $\ka<n\eps$ then $2c\av{\Del_x}^{\frac{\ka}{n}-\eps}<\frac{1}{2}$ for large values of $\av{\Del_x}$ and so for
a suitable $C'$ the latter expression is $\le C'e^{-\frac{1}{2}\av{\Del_x}^\eps}$ as desired.
\end{proof}
\subsection{}
In applications one exhibits a simplex set $\Phi$ for $x\in\Om$ and it is desirable to be able to determine that it generates $\Del_x$. 
In this section we prove Theorem~\ref{index theorem} establishing this in some cases. This theorem also discusses the 
topological complexity of $Ax\cap X^{\ge\del_1}$.
 
Our results rely on an assumption regarding the lattice $\Del_\Phi$ when considered as a point in the space of lattices in
$\bR^d_0$ (identified up to homothety). More precisely, we will have to assume that $\Del_\Phi$ belongs to a given compact set 
in that space (it is not clear at the moment if this assumption is necessary). Below we refer to the aforementioned space of lattices
as the space of \textit{shapes} in order to distinguish it from $X$.
 
It might be worth mentioning here that understanding the set of shapes $\set{\Del_x:x\in\Om}$  is a highly non-trivial question with virtually no results (although in the next section we will see that the shape $\Del_*$ is an accumulation
point of this set).
\begin{definition}\label{omegamk}
Given $M>1$ and $K$ a compact set of shapes we denote by,
$\Om_{M,K}$ the set of $x\in \Om_M$ possessing an $M$-tight simplex set $\Phi$ for $x$ with shape $\Del_\Phi\in K$.
\end{definition}
Because $\Del_\Phi$ is obtained from $\dsim$ by applying a distortion map $h_\Phi$ followed by a dilation, it is obvious from the definition that the set $\set{\Del_\Phi:\norm{h_\Phi}\le T}$ is contained 
in some compact set $K=K_T$ of shapes. Lemma~\ref{dist lemma} below is a converse statement of some sort for tight simplex sets 
with associated lattice in a given compact set of shapes.

\begin{lemma}\label{lemmadistortion}
For any $M>0$ and any compact set of shapes $K$, if $x\in\Om_{M,K}$ and $\Phi\subset
\Del_x$ is an $M$-tight simplex set then the distortion of $\Phi$ satisfies $\norm{h_\Phi}\ll_{M,K} 1$.
\end{lemma}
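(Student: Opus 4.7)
The plan is to apply Lemma~\ref{dist lemma}\eqref{dist1}, which gives $\norm{h_\Phi}\asymp|\Del_\Phi|^{-1/n}\xi_\Phi$, and to bound $\xi_\Phi$ from above and $|\Del_\Phi|^{1/n}$ from below, both by the scale $L:=-\log\ell(x)$. Here I read the hypothesis as saying that $\Phi$ is the $M$-tight simplex set witnessing $x\in\Om_{M,K}$, so that $\Del_\Phi\in K$. The upper bound on $\xi_\Phi$ is immediate: $M$-tightness gives $\xi_\Phi\le\frac{2}{n}(L+\log M)$, while Proposition~\ref{length bound prop} applied to $\Phi$ itself yields the matching lower bound $\xi_\Phi\gg L$, so $\xi_\Phi\asymp_M L$.

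The heart of the argument is a universal lower bound on the top successive minimum of $\Del_x$, namely $\lam_n(\Del_x)\gg L$. I would prove this as follows. Suppose $v_1,\dots,v_n\in\Del_x$ are linearly independent with $\max_i\norm{v_i}\le r$, and set $v_0:=-\sum_{i=1}^n v_i$. The collection $\{v_0,v_1,\dots,v_n\}$ sums to zero, spans $\bR^d_0$, and has cardinality $d$: indeed $v_0=0$ would give the nontrivial relation $\sum v_i=0$ contradicting linear independence, and $v_0=v_j$ for some $j$ would yield $2v_j+\sum_{i\ne j}v_i=0$, also a contradiction. Hence this is a simplex set contained in $\Del_x$, and its $\xi$ is at most $\ll r$ (using $\norm{v_0}\le nr$ and the equivalence $\ceil{\cdot}\asymp\norm{\cdot}$ of~\eqref{eq1054}). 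Proposition~\ref{length bound prop} then forces $r\gg L$, so that $\lam_n(\Del_x)\gg L$.

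To conclude, since $\Del_\Phi\subset\Del_x$ we inherit $\lam_n(\Del_\Phi)\ge\lam_n(\Del_x)\gg L$. The compact-shape hypothesis $\Del_\Phi\in K$, together with Mahler's compactness and Minkowski's second theorem, forces every successive minimum of $\Del_\Phi$ to be comparable to $|\Del_\Phi|^{1/n}$, with constants depending only on $K$. Hence $|\Del_\Phi|^{1/n}\asymp_K\lam_n(\Del_\Phi)\gg_K L$, and substituting back into Lemma~\ref{dist lemma}\eqref{dist1} yields $\norm{h_\Phi}\ll_{M,K}L^{-1}\cdot L\asymp 1$. The main obstacle is the simplex-set construction in the middle paragraph, which turns the hypothesis of $n$ short independent lattice vectors into a direct contradiction with Proposition~\ref{length bound prop}; once that bound on $\lam_n(\Del_x)$ is in hand, the rest is a routine combination of Lemma~\ref{dist lemma} with standard facts about lattices of bounded shape.
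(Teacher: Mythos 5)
Your proof is correct, and although it runs on the same engine as the paper's --- an auxiliary ``well-rounded'' simplex set inside $\Del_x$ to which Proposition~\ref{length bound prop} is applied, played off against the $M$-tightness upper bound $\xi_\Phi\le\frac{2}{n}(-\log\ell(x)+\log M)$ --- the packaging is genuinely different. The paper takes a reduced basis of $\Del_\Phi$ (this is where $\Del_\Phi\in K$ enters), completes it to a simplex set $\Phi'$ with $\norm{h_{\Phi'}}\ll_K 1$, factors $h_\Phi=gh_{\Phi'}$, and bounds $\norm{g}$ via the telescoping estimate $\xi_\Phi-\xi_{\Phi'}\ll_M 1$ coming from $e^{-\frac{n}{2}\xi_{\Phi'}}\ll\ell(x)\le Me^{-\frac{n}{2}\xi_\Phi}$. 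You instead isolate the universal statement $\lam_n(\Del_x)\gg -\log\ell(x)$ for every $x\in\Om$ --- proved by completing any $n$ short independent vectors of $\Del_x$ to a simplex set via $v_0=-\sum_1^n v_i$, a construction the paper also uses for $\Phi'$ --- and only afterwards invoke $\Del_\Phi\in K$ (through Mahler compactness and Minkowski's second theorem) to convert this into $\av{\Del_\Phi}^{\frac{1}{n}}\gg_K -\log\ell(x)$, finishing with Lemma~\ref{dist lemma}\eqref{dist1}. Your intermediate bound on $\lam_n(\Del_x)$ is independent of $K$ and of the choice of simplex set, a mild gain in generality, while the paper's route avoids successive minima and Minkowski's second theorem altogether. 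One small point common to both write-ups: the additive constants ($\log M$ and the implied constant in Proposition~\ref{length bound prop}) are absorbed into the multiplicative ones only because $\xi_\Phi\gg 1$, which ultimately rests on the lower bound for $\av{\Del_x}$ (Lemma~\ref{xi bound}); you, like the paper, leave this implicit, and the degenerate regime where $\xi_\Phi\ll_M 1$ is trivial anyway.
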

We note that in applications, most often one knows that $x\in\Om_M$ belongs to $\Om_{M,K}$ because one exhibits
a simplex set $\Phi$ of bounded distortion, which makes the conclusion of the lemma automatic. Nonetheless we
chose to give it here as it seems reasonable to expect an application in which one knows that $\Del_\Phi$ belongs to a compact set of shapes without knowing that $\Phi$ is responsible for that.
\begin{proof}
Let $\Phi$ be as in the statement. The assumption $\Del_\Phi\in K$, implies that there exists a 
basis $\set{\mb{t}_\ell}_1^n$ of $\Del_\Phi$  such 
that the linear map $h$ sending $\mb{b}_\ell$ to $\av{\Del_\Phi}^{-\frac{1}{n}}\mb{t}_\ell$, ($\ell=1\dots n$), satisfies 
$\norm{h}\ll_K1$ . Denote $\Phi'\defi \set{\mb{t}_\ell}_1^{n+1}$ where $\mb{t}_{n+1}=-\sum_1^n\mb{t}_\ell$ so that 
$h_{\Phi'}=h$. Let $g$ be a linear map sending $\Phi'$ to $\Phi$ and note that $h_\Phi=gh_{\Phi'}$. It follows that we 
would be done if we show that $\norm{g}\ll_{M,K}1$. To see this we note that on the one hand by Lemma~\ref{dist lemma}\eqref{dist1.6}, 
for any $\mb{t}\in \Phi'$, 
$\norm{\mb{t}}\gg_K \xi_{\Phi'}$ and therefore $\norm{g}\ll_K \frac{\xi_\Phi}{\xi_{\Phi'}}$. On the other hand, by Proposition~\ref{length bound prop} and our assumption we have that $e^{-\frac{n}{2}\xi_{\Phi'}}\ll \ell(x)\le 
M e^{-\frac{n}{2}\xi_\Phi}$ and therefore $\xi_\Phi-\xi_{\Phi'}\ll_M1$. We conclude that $\norm{g}\ll_{M,K} 1+\frac{1}{\xi_{\Phi'}}$. 
Since $\xi_{\Phi'}\gg 1$ by Lemma~\ref{xi bound}, we get $\norm{g}\ll_{M,K}1$ as desired.
\end{proof}
The following lemma gives us the tool with which we bound the index of one lattice in another.
\begin{lemma}\label{index lemma}
Let $\Del<\Sig$ be lattices and suppose that there is a ball $B$ and vectors $\mb{v}_i$, $i=1,\dots,\ell,$ such that $\on{(i)}$
$\Sig \subset \set{\mb{v}_i}_{i=1}^\ell +B +\Del$, and $\on{(ii)}$ $\av{\Sig\cap (\mb{v}_i+B+\mb{u})}\le 1$ for any $\mb{u}\in\Del$, $1\le i\le \ell$. Then
$\br{\Sig:\Del}\le \ell$. 
\end{lemma}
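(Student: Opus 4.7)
The lemma is a pigeonhole/pairing argument, so my plan is to produce an injection from the coset space $\Sig/\Del$ into the index set $\{1,\dots,\ell\}$.

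First I would pick one representative $\sig_1,\dots,\sig_k$ for each of the $k \defi [\Sig:\Del]$ cosets of $\Del$ in $\Sig$. By hypothesis (i), for each $j$ there exist an index $i_j\in\{1,\dots,\ell\}$, a point $b_j\in B$, and a lattice vector $\mb{u}_j\in\Del$ such that $\sig_j = \mb{v}_{i_j} + b_j + \mb{u}_j$. Equivalently, the point $\sig_j - \mb{u}_j$ lies in $\Sig$ (since $\Del\subset\Sig$) and belongs to $\mb{v}_{i_j}+B$. I would record the assignment $j\mapsto i_j$ and aim to show it is injective, which immediately gives $k\le\ell$.

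To prove injectivity, suppose $i_j=i_{j'}=i$ for some $j\ne j'$. Then both $\sig_j-\mb{u}_j$ and $\sig_{j'}-\mb{u}_{j'}$ lie in $\Sig\cap(\mb{v}_i+B+\mb{0})$, so hypothesis (ii) applied with $\mb{u}=\mb{0}$ forces
\begin{equation*}
\sig_j-\mb{u}_j=\sig_{j'}-\mb{u}_{j'}.
\end{equation*}
Thus $\sig_j-\sig_{j'}=\mb{u}_j-\mb{u}_{j'}\in\Del$, meaning $\sig_j$ and $\sig_{j'}$ represent the same coset of $\Del$ in $\Sig$. This contradicts the choice of distinct coset representatives, proving the map is injective.

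There is no real obstacle here—the lemma is a clean pigeonhole once one realizes that subtracting the $\Del$-part of the decomposition moves each coset representative into a single translate $\mb{v}_i+B$, where hypothesis (ii) allows at most one element of $\Sig$. The only minor point to verify carefully is that $\sig_j-\mb{u}_j\in\Sig$, which uses nothing more than $\Del\subset\Sig$.
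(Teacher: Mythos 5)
Your proof is correct, and it takes a genuinely different route from the paper's. The paper defines a map $\al:\Sig\to\Del$ sending $\mb{t}$ to the $\Del$-component $\mb{u}$ of a decomposition $\mb{t}\in\mb{v}_i+B+\mb{u}$, observes via (ii) that $\al$ is at most $\ell$-to-one, and then deduces $[\Sig:\Del]\le\ell$ by an asymptotic lattice-point count: comparing $\av{\Sig\cap C_T}$ with $\av{\Del\cap C_{T+T_0}}$ and using $\av{\Del'}^{-1}=\lim_T T^{-n}\av{\Del'\cap C_T}$. You instead work directly with coset representatives: after translating each $\sig_j$ by its $\Del$-part into a single set $\mb{v}_{i_j}+B$, hypothesis (ii) with $\mb{u}=\mb{0}$ pins down at most one element of $\Sig$ there, so $j\mapsto i_j$ is injective and $[\Sig:\Del]\le\ell$ by pigeonhole. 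Your argument is more elementary (no limits, no covolume density formula) and slightly more general: it never uses that $B$ is bounded, nor that $\Del$ and $\Sig$ have full rank, whereas the paper's counting argument needs both (boundedness to get $\norm{\al(\mb{t})}\le\norm{\mb{t}}+T_0$, and equal rank so that the covolume ratio equals the index). The paper's version, on the other hand, adapts immediately to quantitative weakenings of (ii) such as $\av{\Sig\cap(\mb{v}_i+B+\mb{u})}\le m$, yielding $[\Sig:\Del]\le\ell m$ with no change to the argument; your pigeonhole would need a small modification there. Both are complete proofs of the stated lemma.
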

\begin{proof}
Using assumption (i) we may define a map $\al:\Sig\to\Del$ by the following procedure: Given $\mb{t}\in\Sig$ we choose $1\le i\le \ell$ such that $\mb{t}\in \mb{v}_i+B+\mb{u}$ (with $\mb{u}\in\Del$), and set $\al(\mb{t})=\mb{u}$. Assumption (ii) implies that $\al$ is at most $\ell$ to 1. 
Note also that there is a constant $T_0$ such that $\norm{\al(\mb{t})}\le \norm{\mb{t}}+T_0$. We conclude that 
if we denote by $C_T$ the cube of side length $T$ centered at $0$, then for any $T>0$, 
$\av{\Sig\cap C_T}\le \ell\av{\al(\Sig)\cap C_{T+T_0}}$. 

Recall that for any lattice $\Del'$,
$\av{\Del'}^{-1}=\lim_T \frac{1}{T^n}\av{\Del'\cap C_T}$. It follows that 
$$\br{\Sig:\Del}=\frac{\av{\Del}}{\av{\Sig}}=\lim_T\frac{\av{\Sig\cap C_T}}{\av{\Del\cap C_T}}
\le \limsup_T \ell \frac{\av{\al(\Sig)\cap C_{T+T_0}}}{\av{\Del\cap C_T}}\le \ell.
$$
\end{proof}
We use the following definition to discuss what we referred to above as `topological complexity'.
\begin{definition}
Given $\del_1>0$ and a lattice $x$ we
say that $\mb{t}_1,\mb{t}_2\in\bR^d_0$ are
two \textit{distinct visits times} to $X^{\ge\del_1}$ if $a(\mb{t}_i)x$ belong to two distinct connected components of $Ax\cap X^{\ge\del_1}$.
\end{definition}
The following lemma will help us distinguish between visit times.
\begin{lemma}\label{distinct visits lemma}
Let $x\in \Om$, $B_1,B_2$ two balls in $\bR^d_0$ and $\del_1>0$. Assume that 
$\on{(i)}$ $\exists \mb{t}_i\in B_i$ with $a(\mb{t}_i)x\in X^{\ge\del_1}$, and $\on{(ii)}$
$\set{a(\mb{t})x :\mb{t}\in\partial B_1\cup\partial B_2}\subset X^{<\del_1}$. Then, if $\mb{t}_i$ are not distinct visit times to $X^{\ge\del_1}$ then there exist $\mb{u}_i\in B_i$ such that $\mb{u}_1-\mb{u}_2\in\Del_x$.
\end{lemma}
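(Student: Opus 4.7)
The plan is to lift a path in $Ax\cap X^{\ge\del_1}$ joining $a(\mb{t}_1)x$ to $a(\mb{t}_2)x$ to the universal cover $\bR^d_0$, and then use condition $\on{(ii)}$ to trap the lift inside $B_1$. Since $Ax$ is compact, the orbit map $\mb{t}\mapsto a(\mb{t})x$ realizes it as the torus $\bR^d_0/\log\on{stab}_A(x)$, and standard covering space theory gives unique path lifts. Assuming $\mb{t}_1,\mb{t}_2$ are \emph{not} distinct visit times to $X^{\ge\del_1}$, the points $a(\mb{t}_1)x$ and $a(\mb{t}_2)x$ lie in a common connected component $K$ of $Ax\cap X^{\ge\del_1}$; choose a continuous path $\gamma:[0,1]\to K$ joining them and lift it to $\tilde\gamma:[0,1]\to \bR^d_0$ with $\tilde\gamma(0)=\mb{t}_1$.

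The central step is to show $\tilde\gamma([0,1])\subset B_1$. First, $\on{(i)}$ and $\on{(ii)}$ force $\mb{t}_1$ into the interior of $B_1$, since $\mb{t}_1\in\partial B_1$ would give $a(\mb{t}_1)x\in X^{<\del_1}$ by $\on{(ii)}$, contradicting $\on{(i)}$. Now if $\tilde\gamma$ ever exited $B_1$, continuity would produce a smallest $s_0\in(0,1]$ with $\tilde\gamma(s_0)\in\partial B_1$; then $\on{(ii)}$ yields $\gamma(s_0)=a(\tilde\gamma(s_0))x\in X^{<\del_1}$, contradicting $\gamma(s_0)\in K\subset X^{\ge\del_1}$. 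Hence $\tilde\gamma([0,1])\subset B_1$.

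Setting $\mb{u}_1\defi \tilde\gamma(1)\in B_1$ and $\mb{u}_2\defi \mb{t}_2\in B_2$, the identity $a(\tilde\gamma(1))x=\gamma(1)=a(\mb{t}_2)x$ means $a(\mb{u}_1-\mb{u}_2)$ stabilizes $x$, so $\mb{u}_1-\mb{u}_2\in\log\on{stab}_A(x)\subset\Del_x$, as desired. The only mildly subtle point is extracting the path $\gamma$ from the hypothesis of lying in a common connected component of the closed set $K$. This is benign: $K$ is cut out of the smooth manifold $Ax$ by the continuous inequality $\ell\ge\del_1$, so its components are path-connected in the relevant regimes. Alternatively, one can bypass the issue entirely by working with the open condition $X^{>\del_1}$ in place of $X^{\ge\del_1}$ throughout, since components of open subsets of manifolds are automatically path-connected.
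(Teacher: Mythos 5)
Your proof is correct and follows essentially the same route as the paper: lift the connecting path through the covering map $\mb{t}\mapsto a(\mb{t})x$ starting at $\mb{t}_1$, use hypothesis (ii) to trap the lift inside $B_1$, and take $\mb{u}_1$ to be the lift's endpoint and $\mb{u}_2=\mb{t}_2$. Your extra remark on path-connectedness of the component is a minor refinement the paper passes over silently.
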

\begin{proof}
If $a(\mb{t}_i)x$ are in the same component of $Ax\cap X^{\ge \del_1}$ then we may join them by a path within that component. As
the map $\mb{t}\mapsto a(\mb{t})x$ is a covering map, this path can be lifted to a one starting at $\mb{t}_1$. Because of assumption (ii), this lift is contained in $B_1$ and in particular, its end point $\mb{t}_2'$, which satisfies $a(\mb{t}_2')x=a(\mb{t}_2)x$,
is in $B_1$. We conclude that the difference $\mb{t}_2'-\mb{t}_2$ belongs to $\Del_x$ and so we may take $\mb{u}_1=\mb{t}_2'$
and $\mb{u}_2=\mb{t}_2$.  
\end{proof}

\begin{theorem}\label{index theorem}
Fix $M>1$, $K$ a compact set of shapes, and $\del_1\in(0,\del_0)$. For all but finitely many $x\in \Om_{M,K}$, if
$\Phi\subset \Del_x$ is an $M$-tight simplex set and we let
$$\mb{W}_\Phi''\defi\set{\mb{w}_\tau\in\mb{W}_\Phi': \exists \mb{t}\in \mb{w}_\tau+B_r, \;a(\mb{t})x\in X^{\ge\del_1}},$$
then,
\begin{enumerate}
\item\label{eq1524}  $\br{\Del_x:\Del_\Phi}\le \av{\mb{W}_\Phi''}$ (and of course, $\av{\mb{W}_\Phi''}\le n!$). 
\item\label{eq1525} If $d$ is prime then $\br{\Del_x:\Del_\Phi}=1$.
\item\label{eq1526} If $\mb{W}_\Phi''=\mb{W}_\Phi'$ then 
$\br{\Del_x:\Del_\Phi}=1$.
\item\label{eq1046} Under assumption~\eqref{eq1525} or \eqref{eq1526}, there are at least $\av{\mb{W}_\Phi''}$ distinct visits to $X^{\ge\del_1}$.
\end{enumerate}
\end{theorem}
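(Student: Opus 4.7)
The plan combines three tools. Proposition~\ref{eomprop}\eqref{eq1651} confines visit times to $X^{\ge\del}$ inside the tubes $\mb{W}_\Phi'+B_r+\Del_\Phi$; Lemma~\ref{index lemma} bounds $\br{\Del_x:\Del_\Phi}$; and Lemma~\ref{reduction lemma}\eqref{claim5} supplies the combinatorial obstruction used in parts~(2) and~(3). Fix $\ka\in(0,1)$ small. For all but finitely many $x\in\Om_{M,K}$, $\del_\ka(x)<\del_1$, so Proposition~\ref{eomprop}\eqref{eq1651} gives $V_{\del_1}\defi\set{\mb{t}\in\bR^d_0:a(\mb{t})x\in X^{\ge\del_1}}\subset\mb{W}_\Phi'+B_r+\Del_\Phi$. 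Since $U,A$ commute and $U$ preserves norms, $V_{\del_1}$ is $\Del_x$-invariant; Theorem~\ref{thm del0} provides a basepoint $\mb{t}_0\in V_{\del_0}\subset V_{\del_1}$. By Lemma~\ref{lemmadistortion}, $\norm{h_\Phi}\ll_{M,K}1$, so after rescaling the fixed $\Phi_*$-picture via $\av{\Del_\Phi}^{1/n}h_\Phi$, the $n!$ cosets $\mb{w}_\tau+\Del_\Phi$ ($\tau\in\crly{S}'$) are pairwise separated in the torus $\bR^d_0/\Del_\Phi$ by distance $\gg_{M,K}\av{\Del_\Phi}^{1/n}$, dominating $2r\ll\av{\Del_\Phi}^{\ka/n}$ for large $\av{\Del_\Phi}$.

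By $\Del_x$-invariance, every $\mb{s}\in\Del_x$ lies in a unique $\mb{w}_{\tau(\mb{s})}-\mb{t}_0+B_r+\Del_\Phi$, and translating the representative back by an element of $\Del_\Phi\subset\Del_x$ shows $\mb{w}_{\tau(\mb{s})}\in\mb{W}_\Phi''$; the assignment descends to $\wt\tau:\Del_x/\Del_\Phi\to\mb{W}_\Phi''$. For part~(1) I would apply Lemma~\ref{index lemma} with $\Sig=\Del_x$, $\Del=\Del_\Phi$, $B=B_r$, $\set{\mb{v}_i}=\mb{W}_\Phi''-\mb{t}_0$: condition~(i) is the cover just described, while condition~(ii) is equivalent to injectivity of $\wt\tau$, which reduces to $\norm{\mb{v}}>2r$ in the ceiling norm for every $\mb{v}\in\Del_x\setminus\Del_\Phi$. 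For the latter, the order $m$ of $\mb{v}$ in $\Del_x/\Del_\Phi$ divides $k\defi\br{\Del_x:\Del_\Phi}$ and $m\mb{v}\in\Del_\Phi\setminus\set{0}$, hence $\norm{\mb{v}}\ge\lam_1(\Del_\Phi)/k\gg_{M,K}\av{\Del_\Phi}^{1/n}/k$. An a~priori bound $k\le n!$---extracted from the disjoint cover once the shrinking torus-balls of radius $r/\av{\Del_\Phi}^{1/n}$ admit at most one element of $\Del_x/\Del_\Phi$ apiece---then closes the estimate to $\norm{\mb{v}}\gg_{M,K,n}\av{\Del_\Phi}^{1/n}$, dominating $2r$, and Lemma~\ref{index lemma} yields $\br{\Del_x:\Del_\Phi}\le\av{\mb{W}_\Phi''}$.

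For parts~(2) and~(3), suppose $\Del_x\ne\Del_\Phi$, pick $\mb{s}\in\Del_x\setminus\Del_\Phi$, and set $\sig=\tau(0)$, $\tau=\tau(\mb{s})$, distinct in $\crly{S}'$ by Lemma~\ref{reduction lemma}\eqref{claim4}. Lemma~\ref{reduction lemma}\eqref{claim5} supplies $\tau'\in\crly{S}$ and $1\le k\le n$ with $\mb{w}_{\tau'}+k(\mb{w}_\tau-\mb{w}_\sig)\notin\mb{W}_\Phi+\Del_\Phi$; case~(2) ($d$ prime) allows $\tau'=\sig$ with basepoint $\mb{t}_0$, while case~(3) lets me use any $\tau'$ granted by the lemma and pick $\mb{t}'\in V_{\del_1}\cap(\mb{w}_{\tau'}+B_r)$ supplied by $\mb{W}_\Phi''=\mb{W}_\Phi'$. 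Then $\Del_x$-invariance places $\mb{t}'+k\mb{s}$ both inside $\mb{W}_\Phi'+B_r+\Del_\Phi$ (cover) and within $\Del_\Phi+(2k+2)B_r$ of $\mb{w}_{\tau'}+k(\mb{w}_\tau-\mb{w}_\sig)$ (arithmetic); for all but finitely many $x$ the positive $\Phi_*$-picture distance from the latter to $\mb{W}_\Phi+\Del_\Phi$, rescaled by $\av{\Del_\Phi}^{1/n}$, dominates $(2n+2)r$, yielding the contradiction. For part~(4), Proposition~\ref{eomprop}\eqref{eq1151} places the orbit in $X^{<\del}\subset X^{<\del_1}$ on the boundaries of the simplex-shaped components of the complement of $(1-\rho)\frac{n}{2}S_\Phi+\Del_\Phi$ (Figure~\ref{figure passing to simplexes}); the conclusion $\Del_x=\Del_\Phi$ from~(2)/(3) descends these to distinct connected components of $Ax$, so pairwise application of Lemma~\ref{distinct visits lemma} to basepoints $\mb{t}_\tau\in V_{\del_1}\cap(\mb{w}_\tau+B_r)$ indexed by $\mb{w}_\tau\in\mb{W}_\Phi''$ yields the claimed $\av{\mb{W}_\Phi''}$ distinct visits.

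The main technical obstacle is the injectivity of $\wt\tau$ in part~(1): the lower bound $\norm{\mb{v}}\gg\av{\Del_\Phi}^{1/n}/k$ depends on the a~priori bound $k\le n!$, which is a form of the sought conclusion. Closing this bootstrap---through the Kronecker-type universal lower bound $\lam_1(\Del_x)\gg_d 1$, the rescaling of Lemma~\ref{lemmadistortion}, and the shrinkage of $B_r$ in the fixed $\Phi_*$-picture---while tracking the $\ll_{M,K}$ constants cleanly is the technical heart of the argument.
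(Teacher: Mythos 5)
Your strategy coincides with the paper's for all four parts: confine the visit times to $\mb{W}_\Phi'+B_r+\Del_\Phi$ via Proposition~\ref{eomprop}\eqref{eq1651}, feed the resulting cover of the coset $\mb{t}_0+\Del_x$ into Lemma~\ref{index lemma}, use Lemma~\ref{reduction lemma}\eqref{claim5} together with the separation of the balls for parts (2)--(3), and Lemma~\ref{distinct visits lemma} for part (4). Parts (2), (3) and (4) as you sketch them are essentially the proof given in the paper.

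The genuine gap --- which you flag yourself --- is the verification of hypothesis (ii) of Lemma~\ref{index lemma} in part (1), i.e.\ that no ball $\mb{w}_\tau+B_r+\mb{u}$ contains two points of the coset; equivalently, that $\Del_x$ has no nonzero vector of norm $\le 2r$. Your route, $\norm{\mb{v}}\ge\lam_1(\Del_\Phi)/k$ with $k=\br{\Del_x:\Del_\Phi}$, presupposes an a priori bound on $k$, which is a form of the conclusion you are after; and the universal bound $\lam_1(\Del_x)\gg_d 1$ cannot rescue it, because $r=c\av{\Del_x}^{\ka/n}\to\infty$. The bootstrap closes without any index bound: if $0\ne\mb{v}\in\Del_x$ had $\norm{\mb{v}}\le 2r$, the whole unbounded progression $\mb{t}_0+\bZ\mb{v}$ would lie in the coset and hence in $\mb{W}''_\Phi+B_r+\Del_\Phi$; since by Lemma~\ref{lemmadistortion} and Lemma~\ref{dist lemma}\eqref{dist1.5} the pairwise gaps between the balls of this union are $\gg_{M,K}\av{\Del_\Phi}^{1/n}-2r>2r$ for all but finitely many $x\in\Om_{M,K}$, consecutive points of the progression must share a ball, so the entire progression is trapped in a single ball of radius $r$ --- absurd. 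This is exactly how the paper derives its contradiction to the containment of the coset, and with this insertion your part (1), and hence the dependent steps in parts (2)--(4), is complete. (A minor inaccuracy: condition (ii) of Lemma~\ref{index lemma} asks that each individual ball meet $\Sig$ in at most one point, which is not the same as injectivity of your map $\wt{\tau}$; the short-vector statement above is what is actually needed.)
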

\begin{remark}
We note that the assumption in~\eqref{eq1526} yields the best results, namely, we both get that $\Phi$ generates $\Del_x$ and that 
the orbit $Ax$ is as complex it could be. We point out two ways to verify this assumption in practice:
\begin{enumerate}
\item If the lattice $x$ belongs to $\Om_M(\eps,C)$ and the times $\mb{s}_\tau$ from Definition~\ref{omega epsilon c} are assumed 
to be bounded then it follows that for any $\tau\in\mb{W}_\Phi$, $a(\mb{w}_\tau)x\in X^{\ge\del_1}$ for some fixed $\del_1$. This 
is what happens in the construction discussed in~\S\ref{construction section} below.
\item Sometimes one can exploit symmetries of $x$ to bootstrap the existence of one $\mb{w}_\tau$ such that $a(\mb{w}_\tau)x\in X^{\ge\del_1}$ and upgrade it into the same statement for all $\mb{w}_\tau$. Such symmetries might exist if one assumes $x$ 
arises from a Galois extension.
\end{enumerate}
\end{remark}
\begin{proof}
Let $x\in \Om_{M,K}$ and $\Phi$ an $M$-tight simplex set for $x$. 
We use the notation of Proposition~\ref{eomprop} fixing some $\ka\in(0,1)$. 

Consider a time $\mb{t}$ such that $a(\mb{t})x\in X^{\ge\del_0}$. By Proposition \ref{eomprop}\eqref{eq1651}, 
(and the definition of $\mb{W}_\Phi''$), for all but finitely many $x\in\Om_M$
we have that 
\eqlabel{eq1514}{
\mb{t}+\Del_x\subset \mb{W}''_\Phi+ B_{r}+\Del_\Phi,
} 
where $r\asymp \av{\Del_x}^{\frac{\ka}{n}}$ is given in~\eqref{functions}.
Statement~\eqref{eq1524} will follow from Lemma~\ref{index lemma}, once 
we prove that for any $\tau$, the coset $\mb{t}+\Del_x$ can contain at most one point in each ball $\mb{w}_\tau+B_r+\mb{v}$ (for $\mb{v}\in\Del_\Phi$).

Assume such a ball contains two points of a coset of $\Del_x$. Then $\Del_x$ contains a non-trivial vector of size $\le 2r$. The assumption
that the shape of $\Del_\Phi$ is in $K$ implies, by Lemma~\ref{lemmadistortion}, that the distortion map $h_\Phi$ satisfies $\norm{h_\Phi}\ll_{M,K}1$, which in turn, by Lemma~\ref{dist lemma}\eqref{dist1.5}, implies that the distances between the balls composing $\mb{W}_\Phi+B_r+\Del_\Phi$
is $\gg_{M,K} \av{\Del_\Phi}^{\frac{1}{n}}-2r$. Since the latter expression is $>2r$ as soon as $\av{\Del_x}$ is big enough, we arrive at
a contradiction to the containment~\eqref{eq1514}.

We now establish~\eqref{eq1525} and \eqref{eq1526}. Let $\mb{t}$ be such that $a(\mb{t})x\in X^{\ge\del_0}$ as above. By \eqref{eq1514}, we may assume that 
\eqlabel{eq1550}{
\mb{t}\in \mb{w}_\tau+B_r,
}
for some $\mb{w}_\tau\in\mb{W}_\Phi''$. We will show that 
$\mb{t}+\Del_x\subset \mb{w}_\tau+B_r+\Del_\Phi$ which together with the already established fact by which each ball in 
the latter set contains at most one point of $\mb{t}+\Del_x$, will imply, by Lemma~\ref{index lemma}, that $\Del_x=\Del_\Phi$ as claimed. 

Assuming otherwise, there exists $\mb{w}_\tau\ne \mb{w}_\sig\in\mb{W}_\Phi''$ such that $(\mb{t}+\Del_x)\cap\pa{\mb{w}_\sig+B_r}\ne\varnothing$, which together
with~\eqref{eq1550} implies the existence of $\mb{s}\in B_{2r}$ such that 
\eqlabel{eq1157}{
\mb{v}\defi\mb{w}_\tau-\mb{w}_\sig+\mb{s}\in\Del_x.
}
To establish~\eqref{eq1525}, assume $d$ is prime and conclude from Lemma~\ref{reduction lemma}\eqref{claim5} that there
exists an integer $1\le k\le n$ such that $\mb{w}_\tau+k(\mb{w}_\tau-\mb{w}_\sig)\notin \mb{W}_\Phi+\Del_\Phi.$ As these objects are obtained from the corresponding ones for $\Phisym$ by applying a distortion map $h_\Phi$ followed by a dilation by a factor of 
$\av{\Del_\Phi}^{\frac{1}{n}}$, we conclude by Lemma~\ref{lemmadistortion}, Lemma~\ref{dist lemma}\eqref{dist1.5}, that 
$$\on{d}( \mb{w}_\tau+k(\mb{w}_\tau-\mb{w}_\sig), \mb{W}_\Phi+\Del_\Phi)\gg_{M,K} \av{\Del_x}^{\frac{1}{n}}.$$ 
Since
$\mb{w}_\tau+k(\mb{w}_\tau-\mb{w}_\sig) = \mb{t}+k\mb{v} +\mb{s}'$ for $\mb{s}'$ with $\norm{\mb{s}'}\ll r$ we have that 
$\on{d}(\mb{t}+k\mb{v},\mb{W}_\Phi+\Del_\Phi)\gg_{M,K} \av{\Del_x}^{\frac{1}{n}}$ which shows in particular, that for all but finitely many $x\in\Om_{M,K}$, $\mb{t}+k\mb{v}\notin\mb{W}_\Phi+B_r+\Del_\Phi$ and in turn, by Proposition~\ref{eomprop}\eqref{eq1651} gives that
$a(\mb{t}+k\mb{v})x\in X^{<\del}$. We arrive at the desired contradiction because 
$a(\mb{t}+k\mb{v})x=ua(\mb{t})x\in X^{\ge\del_0}$ (for some $u\in U$).

To establish~\eqref{eq1526} we proceed similarly. By Lemma~\ref{reduction lemma}\eqref{claim5} there exists $\tau'\in\crly{S}$ and 
$1\le k\le n$ such that $\mb{w}_{\tau'}+k(\mb{w}_\tau-\mb{w}_\sig)\notin \mb{W}_\Phi+\Del_\Phi$. 
As above, this implies that 
$\on{d}( \mb{w}_{\tau'}+k(\mb{w}_\tau-\mb{w}_\sig), \mb{W}_\Phi+\Del_\Phi)\gg_{M,K} \av{\Del_x}^{\frac{1}{n}}.$
By assumption there exists $\mb{t}'\in \mb{w}_{\tau'}+B_{r}$ such that $a(\mb{t}')x\in X^{\ge \del_1}$. As above, we conclude
that $\on{d}(\mb{t}'+k\mb{v},\mb{W}_\Phi+\Del_\Phi)\gg_{M,K} \av{\Del_x}^{\frac{1}{n}}$ which implies that 
$\mb{t}'+k\mb{v}\notin \mb{W}_\Phi+B_{r}+\Del_\Phi$ (except for finitely many possible exceptions). By Proposition~\ref{eomprop}\eqref{eq1651} we conclude that $a(\mb{t}'+k\mb{v})x\in X^{<\del}$ which contradicts the fact that for some $u\in U$, $a(\mb{t}'+k\mb{v})x=ua(\mb{t}')x\in X^{\ge\del_1}$ (as soon as $\av{\Del_x}$ is big enough).

Statement~\eqref{eq1046} follows from Lemma~\ref{distinct visits lemma} and the above considerations: 
For any $\mb{w}_\tau\in\mb{W}_\Phi''$ let $\mb{s}_\tau\in B_r$ be such that $a(\mb{w}_\tau+\mb{s}_\tau)x\in X^{\ge\del_1}$. 
By Lemma~\ref{distinct visits lemma}, if amongst these times are two non-distinct visit times, then $\Del_x$ must contain
a vector as in~\eqref{eq1157} which results in a contradiction as shown above.
\end{proof}
\subsection{} We summarize the results above for convenience of reference:
\begin{corollary}\label{maincor}
Fix $M,C\ge 1$, $\eps>0$ and $K$ a compact set of shapes. Then the following hold:
\begin{enumerate}
\item Any sequence of (distinct) orbits $Ax_k$ of lattices $x_k\in\Om_M$  satisfies the conclusion of Corollary~\ref{t.0901}.
\item Any sequence of (distinct) orbits $Ax_k$ of lattices $x_k\in\Om_M(\eps,C)$ satisfies the conclusion of Theorem~\ref{t.0932}.
\item For any sequence of (distinct) orbits $Ax_k$ of lattices $x_k\in\Om_M(\eps,C)\cap \Om_{M,K}$, if $\Phi_k$ is an $M$-tight
simplex set for $x_k$ satisfying Definition~\ref{omega epsilon c} such that the times $\mb{s}_\tau$ appearing there are uniformly bounded, then,  for any $\del_1\in (0,\del_0)$, and for all large $k$, $\on{(i)}$ $\Phi_k$ generates $\Del_x$ and, $\on{(ii)}$ 
$Ax_k\cap X^{\ge \del_1}$ has at least $n!$ distinct connected components.
\end{enumerate}
\end{corollary}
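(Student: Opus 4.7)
The plan is to assemble the three statements from the main tools established earlier: Propositions~\ref{eomprop}, \ref{accprop}, and Theorem~\ref{index theorem}, together with the algebraic number theoretic fact (recorded in the paper) that $\av{\Del_{x_k}}\to\infty$ as the sequence of distinct compact orbits $Ax_k$ varies.

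For (1), fix any $\ka\in(0,1)$ and apply Proposition~\ref{eomprop}\eqref{eq1630} to obtain $\mu_{Ax_k}(X^{\ge\del_\ka(x_k)})\ll\av{\Del_{x_k}}^{-1+\ka}\to 0$. Since $\del_\ka(x_k)\to 0$, for any fixed $\del>0$ we eventually have $X^{\ge\del}\subset X^{\ge\del_\ka(x_k)}$, so $\mu_{Ax_k}(X^{\ge\del})\to 0$. Mahler's criterion places every compact subset of $X$ inside some $X^{\ge\del}$, so the measures converge weak-$*$ to zero, which is Corollary~\ref{t.0901}. For (2), suppose $x=\lim a_kx_k$ exists with $a_k\in A$. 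Since $\ell(x)>0$ and $\del_\ka(x_k)\to 0$, for large $k$ we have $a_kx_k\in X^{\ge\del_\ka(x_k)}$; choosing $\ka<n\eps$ and applying Proposition~\ref{accprop} gives $\on{d}(a_kx_k,A\bZ^d)\le C'e^{-\frac{1}{2}\av{\Del_{x_k}}^\eps}\to 0$. The orbit $A\bZ^d$ is divergent, hence closed in $X$ (the continuous proper map $A\to X$, $a\mapsto a\bZ^d$, has closed image), so $x\in A\bZ^d$ as required for Theorem~\ref{t.0932}.

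For (3), fix $\del_1\in(0,\del_0)$; the key step is to verify the hypothesis $\mb{W}_{\Phi_k}''=\mb{W}_{\Phi_k}'$ of Theorem~\ref{index theorem}\eqref{eq1526} for all large $k$. Given $\tau\in\crly{S}'$, choose $\del_1'\in(\del_1,\del_0)$; by Theorem~\ref{thm del0} applied to the lattice $a(\mb{s}_\tau)\bZ^d$ there exists $\mb{t}^*\in\bR^d_0$ with $a(\mb{t}^*+\mb{s}_\tau)\bZ^d\in X^{\ge\del_1'}$, and because $\mb{s}_\tau$ ranges over a set that is bounded uniformly in $\tau$ and $k$, the vector $\mb{t}^*$ can be chosen from a fixed compact subset of $\bR^d_0$. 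Since $r_\ka(x_k)\asymp\av{\Del_{x_k}}^{\ka/n}\to\infty$, we have $\mb{t}^*\in B_{r_\ka(x_k)}$ for all large $k$; combined with $\norm{g_\tau-I}\le Ce^{-\av{\Del_{x_k}}^\eps}\to 0$, the point $a(\mb{w}_\tau+\mb{t}^*)x_k=a(\mb{t}^*)g_\tau a(\mb{s}_\tau)\bZ^d$ is arbitrarily close to $a(\mb{t}^*+\mb{s}_\tau)\bZ^d\in X^{\ge\del_1'}$, so it lies in $X^{\ge\del_1}$ for $k$ large. Thus every $\mb{w}_\tau$ belongs to $\mb{W}_{\Phi_k}''$, giving $\mb{W}_{\Phi_k}''=\mb{W}_{\Phi_k}'$. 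Theorem~\ref{index theorem}\eqref{eq1526} then yields $\Del_{x_k}=\Del_{\Phi_k}$, proving (i); and Theorem~\ref{index theorem}\eqref{eq1046} produces at least $\av{\mb{W}_{\Phi_k}'}=n!$ distinct connected components of $Ax_k\cap X^{\ge\del_1}$, proving (ii).

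The main obstacle is the bookkeeping in (3): one must couple the exponential decay $\norm{g_\tau-I}\ll e^{-\av{\Del_{x_k}}^\eps}$ to the polynomial growth $r_\ka(x_k)\asymp\av{\Del_{x_k}}^{\ka/n}$ and to the qualitative input of Theorem~\ref{thm del0} in a manner uniform over $\tau\in\crly{S}'$ and independent of the parameter $\del_1\in(0,\del_0)$ of part~(3). The hypothesis that $\mb{s}_\tau$ is uniformly bounded is precisely what makes this coupling possible: it localizes the comparison of $a(\mb{w}_\tau)x_k$ with the divergent orbit $A\bZ^d$ to a fixed region of $\bR^d_0$, so that a single compactness argument based on Theorem~\ref{thm del0} handles all $\tau$ and all $k$ at once.
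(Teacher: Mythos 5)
Your proposal is correct and follows the same route the paper intends: the corollary is stated as a summary with no separate proof, and your argument simply assembles Proposition~\ref{eomprop}, Proposition~\ref{accprop}, and Theorem~\ref{index theorem} together with the fact that $\av{\Del_{x_k}}\to\infty$, exactly as the surrounding text (in particular Remark 1 after Theorem~\ref{index theorem}) indicates. Your verification of $\mb{W}_{\Phi_k}''=\mb{W}_{\Phi_k}'$ in part (3) is a careful elaboration of that remark, correctly exploiting the uniform boundedness of the $\mb{s}_\tau$ and the growth of $r_\ka(x_k)$ so that the conclusion holds for every $\del_1\in(0,\del_0)$.
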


\section{The construction}\label{construction section}
In this section we construct infinite families of lattices satisfying the requirements of Corollary~\ref{maincor} and doing so, we
prove Theorem~\ref{t.0932}.
\subsection{The polynomials}
Fix $\eta>0$ and define 
\eqlabel{Zr}{
\bZ^d(\eta)\defi\set{\mb{m}\in\bZ^d: \forall i,j, \;\frac{\av{m_i-m_j}}{\norm{\mb{m}}}\ge \eta}.
}
Thus, $\bZ^d(\eta)$ consists of those integer vectors which are projectively bounded away from the ${d \choose 2}$ hyperplanes of equal coordinates. For $\mb{m}\in\bZ^d(\eta)$ define 
$$p_{\mb{m}}(x)\defi\prod_1^d(x-m_j)-1.$$ 
\begin{lemma}\label{basic lemma}
Fix $\eta>0$. For all but finitely many $\mb{m}\in\bZ^d(\eta)$, the polynomial $p_{\mb{m}}$ is irreducible over $\bQ$ and has $d$ 
real roots $\theta_j=\theta_j(\mb{m})$, $j=1\dots d$
which when put in the correct order satisfy $\theta_{j}= m_j+O_\eta(\norm{\mb{m}}^{-n})$. 
\end{lemma}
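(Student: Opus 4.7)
The plan is to treat the two claims separately: first locate the real roots via the intermediate value theorem (which also supplies the claimed approximation), and then establish irreducibility by a Schur-type trick tailored to polynomials of this form.

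For the root localization, for each $1\le j\le d$ set $P_j\defi\prod_{i\ne j}(m_j-m_i)$. The hypothesis $\mb{m}\in\bZ^d(\eta)$ gives $|m_i-m_j|\ge\eta\norm{\mb{m}}$ for $i\ne j$, so $(\eta\norm{\mb{m}})^{n}\le|P_j|\le(2\norm{\mb{m}})^{n}$. Note that $p_{\mb{m}}(m_j)=-1$. A short expansion using these bounds yields
$$\prod_{i\ne j}\!\bigl((m_j-m_i)+\tfrac{2}{P_j}\bigr)=P_j+O_\eta(\norm{\mb{m}}^{-1}),$$
hence
$$p_{\mb{m}}\bigl(m_j+\tfrac{2}{P_j}\bigr)=\tfrac{2}{P_j}\bigl(P_j+O_\eta(\norm{\mb{m}}^{-1})\bigr)-1=1+O_\eta(\norm{\mb{m}}^{-n-1}).$$
For all $\mb{m}\in\bZ^d(\eta)$ with $\norm{\mb{m}}$ sufficiently large this value is positive, and the intermediate value theorem produces a real root $\theta_j$ lying between $m_j$ and $m_j+2/P_j$, so $|\theta_j-m_j|\le 2/|P_j|=O_\eta(\norm{\mb{m}}^{-n})$. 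For $\norm{\mb{m}}$ large these errors are much smaller than the separations $\eta\norm{\mb{m}}$ of the $m_j$, so the $\theta_j$ are pairwise distinct. Since $\deg p_{\mb{m}}=d$, they exhaust the roots of $p_{\mb{m}}$ and are all real.

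For irreducibility, by Gauss's lemma it suffices to exclude factorizations $p_{\mb{m}}=fg$ with $f,g\in\bZ[x]$ monic of degrees $k,d-k$, $1\le k\le d-1$. Evaluating at each $m_j$ gives $f(m_j)g(m_j)=p_{\mb{m}}(m_j)=-1$, so $f(m_j),g(m_j)\in\set{\pm 1}$ and in particular $f(m_j)+g(m_j)=0$. Therefore $h\defi f+g$ vanishes at the $d$ distinct integers $m_1,\dots,m_d$, while $\deg h\le\max(k,d-k)<d$. This forces $h\equiv 0$, whence $g=-f$ and $p_{\mb{m}}=-f^2$, contradicting the fact that $p_{\mb{m}}$ is monic of positive degree. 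Note that this half of the argument works for \emph{every} $\mb{m}$ with distinct coordinates, so the ``all but finitely many'' clause in the statement is needed only for the root-localization step.

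The main obstacle is the irreducibility claim. The root estimate is essentially a matter of choosing the right test point for IVT, but irreducibility is not accessible through the standard menu of criteria such as Eisenstein's or Newton polygons applied to $p_{\mb{m}}$ directly; it is the specific shape $\prod(x-m_j)-1$ that makes the argument go through, via the observation that a putative factorization forces $\set{f(m_j),g(m_j)}=\set{\pm 1}$ at $d$ points, giving $f+g$ too many zeros.
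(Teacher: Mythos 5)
Your proof is correct, and both halves take a genuinely different route from the paper's. For the root localization, the paper argues top-down: it takes an arbitrary complex root $\theta$ of $p_{\mb{m}}$, uses the separation $\av{m_i-m_j}\ge\eta\norm{\mb{m}}$ and the identity $\prod_j(\theta-m_j)=1$ to show $\theta$ lies within $O_\eta(\norm{\mb{m}}^{-n})$ of exactly one $m_{j_\theta}$, proves $\theta\mapsto j_\theta$ is a bijection by observing that otherwise the positive integer $\av{p_{\mb{m}}(m_\ell)}$ would be $<1$, and then deduces reality from conjugation symmetry. Your bottom-up IVT argument with the test point $m_j+2/P_j$ is more constructive and reaches the same quantitative conclusion (your error bounds check out: the correction terms in the product each carry one factor $O(\norm{\mb{m}}^{-n})$ against at most $n-1$ factors $O(\norm{\mb{m}})$), though the paper's version is the one that would survive if some roots were a priori complex rather than being produced as real from the start. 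For irreducibility the divergence is sharper: the paper again uses a magnitude argument --- a proper monic integer factor $q=\prod_{j\in I}(x-\theta_j)$ would give $0<\av{q(m_\ell)}<1$ for $\ell\in I$ once $\norm{\mb{m}}$ is large --- whereas your $f+g$ trick is purely algebraic, needs no largeness hypothesis, and as you note establishes irreducibility for \emph{every} $\mb{m}$ with distinct coordinates. That is a genuine strengthening of this half of the lemma; the trade-off is that the paper's size argument recycles the root estimates it has already established and fits the analytic style of the rest of the section. One small point worth making explicit in your write-up: in a factorization $p_{\mb{m}}=fg$ over $\bZ[x]$ the leading coefficients multiply to $1$, so after possibly replacing $(f,g)$ by $(-f,-g)$ you may indeed assume both factors are monic, which is what your degree count on $f+g$ and the final contradiction with $p_{\mb{m}}=-f^2$ both use.
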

\begin{proof}
Let $\mb{m}\in\bZ^d(\eta)$ and let $\theta\in\bC$ be a root of $p_{\mb{m}}$; that is, 
\begin{equation}\label{productis1}
\prod_{j=1}^{d}(\theta-m_j)=1.
\end{equation} 
As $\av{m_i-m_j}\ge \eta\norm{m}$, for all but finitely many $\mb{m}$, 
$\theta$ can satisfy $\av{\theta-m_j}<1$ for at most one choice of $j$. In turn, because of the above 
equation, this inequality must hold for exactly one such $j\defi j_\theta$. 
It follows that for $j\ne j_\theta$ we have $\av{\theta- m_j}= \av{m_{j_\theta}-m_j}+O(1)=O_\eta(\norm{\mb{m}})$.
Going back to the equation $\av{\theta-m_{j_\theta}}=\prod_{j\ne j_\theta} \av{\theta-m_j}^{-1}$ 
we conclude that $\av{\theta-m_{j_\theta}}=O_\eta(\norm{\mb{m}}^{-n})$.

%Write $p_k(x)=\prod_{j=1}^n(x-\theta_{k,j})$ where $\theta_{k,j}$ are the complex roots of $p_k$. 
%
We claim that the map $\theta\mapsto j_\theta$ is one to one and onto from the set of complex zeros of $p_\mb{m}$
to $\set{1,\dots,d}$. Assume this is not the case and write $p_{\mb{m}}(x)=\prod_{j=1}^{d}(x-\theta_{j})$. Then
there are $j_1\ne j_2$ such that 
$\ell\defi j_{\theta_{j_1}}=j_{\theta_{j_2}}$. We conclude that the value
\begin{equation}\label{e1010}
\av{p_{\mb{m}}(m_\ell)}=\prod_{j=1}^{d}\av{m_\ell-\theta_{j}}
\end{equation}
is a non-zero integer on the one hand, but on the other hand, the two factors in the product~\eqref{e1010} that correspond to 
$j_1,j_2$ are $O_\eta(\norm{\mb{m}}^{-n})$ while all the other $n-1$ factors are $O_\eta(\norm{\mb{m}})$. We conclude that
for all but finitely many $\mb{m}$, the value in~\eqref{e1010} is $<1$ which gives a contradiction. 

Thus, we may order the roots $\theta_{j}$ of $p_{\mb{m}}$ in such a way so that 
\begin{align}
\label{e14351} &\av{\theta_{j}- m_j}=O_\eta(\norm{\mb{m}}^{-n}),\\
\label{e14352}\textrm{for $\ell\ne j$, }&\av{\theta_{j}-m_\ell}=O_\eta(\norm{\mb{m}}).
\end{align}
This immediately implies that (as soon as $\norm{\mb{m}}$ is large enough), all the roots are real because $p_\mb{m}$ is real so complex roots come in conjugate pairs. 

A similar argument gives the irreducibility of $p_\mb{m}$: If  $p_\mb{m}$ is reducible, then there must exist a proper subset $I$ of $\set{1\dots d}$ such that 
$q(x)= \prod_{j\in I}(x-\theta_{j})$ is a polynomial over the integers. 
We choose $\ell\in I$ and consider the value 
$
\av{q(m_\ell)}=\prod_{j\in I}\av{m_\ell-\theta_{j}}.
$ 
This number should be a (non-zero) integer by assumption but
on the other hand, 
the term in the product which corresponds to $j=\ell$ is $O_\eta(\norm{\mb{m}}^{-n})$ while all the other terms
are $O_\eta(\norm{\mb{m}})$. We derive a contradiction (as soon as $\norm{\mb{m}}$ is large enough), because there are at most $n-1$ such terms
(as the cardinality of $I$ is assumed to be $\le n$).
\end{proof}
\subsection{The lattices}
Fix $\eta>0$ and $\mb{m}\in\bZ^d(\eta)$ with $\norm{\mb{m}}$ large enough so that Lemma~\ref{basic lemma} applies. 
Fix an abstract root $\theta=\theta(\mb{m})$ of $p_\mb{m}$ and let $F_\mb{m}\defi\bQ(\theta)$ be the number field generated by it. By Lemma~\ref{basic lemma}, $F_\mb{m}$ is a totally real number field of degree $d$ over $\bQ$. By Lemma~\ref{basic lemma}, we can order
the embeddings $\sig_1\dots \sig_d$ of $F_\mb{m}$ into $\bR$  in such a 
way so that $\theta_{j}\defi \sig_j(\theta)$ satisfies $\theta_{j}=m_j+O_\eta(\norm{\mb{m}}^{-n})$. Let 
$$\Lam_\mb{m}=\spa_\bZ\set{1,\theta,\dots,\theta^n}\subset F_\mb{m}.$$
Then, $\Lam_\mb{m}$ is a full module in $F_\mb{m}$; that is, it is a free $\bZ$-module of rank $d$ that spans $F_\mb{m}$ over $\bQ$ (in fact it is 
the order $\bZ[\theta]$). Let $\bm{\sig}:F_\mb{m}\hookrightarrow\bR^d$ denote the map
whose $j$'th coordinate is $\sig_j$.
Then it is a well known fact that the image under $\bm{\sig}$ of a full module in $F_\mb{m}$ is a lattice in $\bR^d$. In particular, if $D_\mb{m}\defi \on{covol}(\bm{\sig}(\Lam_\mb{m}))$ then 
$$x_\mb{m}\defi D_\mb{m}^{-\frac{1}{d}}\bm{\sig}(\Lam_\mb{m})$$ 
is a unimodular lattice in $\bR^d$; i.e.\  $x_\mb{m}\in X$. 
\subsection{} We now show that Corollary~\ref{maincor} applies for the lattices constructed above and by that complete 
the proof of the statements in \S\ref{introduction section}.
\begin{proposition}\label{prop1238}
For any $\eta>0$ there exists $M,C>1$, $\eps>0$ and a compact set of shapes $K$ such that for all but finitely many
$\mb{m}\in\bZ^d(\eta)$, the lattice $x_\mb{m}$ belongs to $\Om_M(\eps,C)\cap \Om_{M,K}$. Moreover, we exhibit an $M$-tight simplex set $\Phi_\mb{m}$
for $x_\mb{m}$ satisfying Definition~\ref{omega epsilon c} such that the times $\mb{s}_\tau$ appearing there are uniformly bounded.
\end{proposition}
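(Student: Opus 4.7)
The starting point is that $p_\mb{m}(\theta)=0$ rewrites as $\prod_{j=1}^d(\theta-m_j)=1$, so each $\theta-m_j$ is a unit in $\Lam_\mb{m}=\bZ[\theta]$ and $\prod_j(\theta_i-m_j)=1$ in every real embedding. This dictates the choice of simplex set
$$\Phi_\mb{m}\defi\set{\mb{t}_j}_{j=1}^d,\qquad\mb{t}_j\defi(\log\av{\theta_i-m_j})_{i=1}^d\in\Del_{x_\mb{m}},$$
with $\sum_j\mb{t}_j=0$ automatic from the product being $1$. I take this $\Phi_\mb{m}$ as the $M$-tight simplex set in the proposition.

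I would first establish the basic asymptotic $\mb{t}_j=\log\norm{\mb{m}}\cdot\mb{b}_j+O_\eta(1)$, where $\mb{b}_j$ is the standard simplex set vector. By Lemma~\ref{basic lemma} and the hypothesis $\mb{m}\in\bZ^d(\eta)$, for $i\ne j$ one has $\log\av{\theta_i-m_j}=\log\av{m_i-m_j}+O_\eta(\norm{\mb{m}}^{-n-1})=\log\norm{\mb{m}}+O_\eta(1)$; and for $i=j$, the identity $\log\av{\theta_j-m_j}=-\sum_{l\ne j}\log\av{\theta_j-m_l}$ (coming from $\prod_l(\theta_j-m_l)=1$) gives $-n\log\norm{\mb{m}}+O_\eta(1)$. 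All the easy hypotheses then follow: $\Phi_\mb{m}$ spans $\bR^d_0$ (since $\set{\mb{b}_j}$ does); $\xi_{\Phi_\mb{m}}=\log\norm{\mb{m}}+O_\eta(1)$; the distortion $\norm{h_{\Phi_\mb{m}}}\ll_\eta 1$, so $\Del_{\Phi_\mb{m}}$ eventually sits in any fixed compact neighborhood $K$ of $\Del_*$ in the space of shapes; and via the Vandermonde formula $D_\mb{m}=\prod_{i<j}\av{\theta_i-\theta_j}\asymp_\eta\norm{\mb{m}}^{n(n+1)/2}$ one obtains $\ell(x_\mb{m})\le D_\mb{m}^{-1/d}\asymp_\eta\norm{\mb{m}}^{-n/2}\asymp_\eta e^{-\frac{n}{2}\xi_{\Phi_\mb{m}}}$, giving $M$-tightness for some $M=M(\eta)$.

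The core of the proof is verifying the condition of Definition~\ref{omega epsilon c}. Fix $\tau\in\crly{S}$; the natural $\bZ$-basis of $\Lam_\mb{m}$ is $\beta_k^\tau\defi\prod_{j=1}^k(\theta-m_{\tau(j)})$, $k=0,\dots,n$ (monic in $\theta$ of degree $k$). Combining the asymptotic for $\mb{t}_j$ with the identity $\mb{w}_\tau^{\Phi_\mb{m}}=d^{-1}\sum_\ell(\ell-1)\mb{t}_{\tau(\ell)}$, a direct telescoping calculation yields
$$(\mb{w}_\tau^{\Phi_\mb{m}})_i=w_{\tau^{-1}(i)}\log\norm{\mb{m}}+O_\eta(1),\qquad w_k\defi\tfrac{d+1-2k}{2}.$$
For the $i$-th coordinate of $a(\mb{w}_\tau^{\Phi_\mb{m}})D_\mb{m}^{-1/d}\bm{\sig}(\beta_k^\tau)$, a case split on whether $\tau^{-1}(i)>k$ (no factor $\theta_i-m_i$ appears in $\sig_i(\beta_k^\tau)$, giving $\sig_i(\beta_k^\tau)\asymp_\eta\norm{\mb{m}}^k$) or $\tau^{-1}(i)\le k$ (the factor $\theta_i-m_i\asymp\norm{\mb{m}}^{-n}$ does appear, giving $\sig_i(\beta_k^\tau)\asymp_\eta\norm{\mb{m}}^{k-1-n}$) shows that the exponent of $\norm{\mb{m}}$ in the product cancels to $0$ exactly when $\tau^{-1}(i)=k+1$ (and the coordinate is then bounded above and below in terms of $\eta$) and is $\le-1$ in all other cases.

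Consequently, the basis matrix of $a(\mb{w}_\tau^{\Phi_\mb{m}})x_\mb{m}$ is within $O_\eta(\norm{\mb{m}}^{-1})$ of a row-permuted diagonal matrix with entries bounded in $[c_1(\eta),c_2(\eta)]$, yielding a factorization $a(\mb{w}_\tau^{\Phi_\mb{m}})x_\mb{m}=g_\tau a(\mb{s}_\tau)\bZ^d$ with $\mb{s}_\tau$ uniformly bounded in $\mb{m}$ and $\tau$, and $\norm{g_\tau-I}=O_\eta(\norm{\mb{m}}^{-1})$ (row permutations and sign ambiguities are absorbed by the $U$-stabilizer of $\bZ^d$). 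Since $\av{\Del_{x_\mb{m}}}\asymp_\eta(\log\norm{\mb{m}})^n$, any $\eps<1/n$ gives $\av{\Del_{x_\mb{m}}}^\eps=o(\log\norm{\mb{m}})$, hence $Ce^{-\av{\Del_{x_\mb{m}}}^\eps}\gg_\eta\norm{\mb{m}}^{-1}$, and Definition~\ref{omega epsilon c} holds for some $C=C(\eta)$. The main technical obstacle is the bookkeeping in this case analysis---carefully tracking how the exponents of $\norm{\mb{m}}$ depend on $\tau$---but the algebraic identity $\prod_j(\theta-m_j)=1$ is precisely what makes $\Phi_\mb{m}$ a small perturbation of $\Phisym$ and drives all the cancellations.
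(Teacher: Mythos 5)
Your proposal is correct and follows essentially the same route as the paper: the same simplex set $\{(\log\av{\theta_i-m_j})_i\}_j$ built from the units $\theta-m_j$, the same asymptotic $\mb{t}_j=\log\norm{\mb{m}}\,\mb{b}_j+O_\eta(1)$ giving tightness and bounded distortion via the Vandermonde discriminant, and the same change of basis to the telescoping products $\prod_{j\le k}(\theta-m_{\tau(j)})$ with the exponent bookkeeping showing $a(\mb{w}_\tau)x_\mb{m}$ is represented by a matrix that is diagonal-up-to-$O_\eta(\norm{\mb{m}}^{-1})$ with bounded diagonal. The only cosmetic difference is that you carry a general $\tau$ through the computation where the paper reduces to $\tau=\mathrm{id}$.
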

\begin{proof}
Let $\mb{m}\in\bZ^d(\eta)$ be of large enough norm so that Lemma~\ref{basic lemma} applies.
Our goal is to find a simplex set $\Phi$ for $x_\mb{m}$ that will be $M$-tight and will satisfy the requirements of $\Om_M(\eps,C)$ and $\Om_{M,K}$. We start by estimating $\ell(x_\mb{m})$. 

As $\mb{1}\in\bm{\sig}(\Lam_\mb{m})$ we have that $v_\mb{m}\defi D_\mb{m}^{-\frac{1}{d}}\mb{1}\in x_\mb{m}$. 
Let us calculate $D_\mb{m}$. By definition $\bm{\sig}(\Lam_\mb{m})$ is the lattice spanned by the columns of the (Vandermonde)
matrix $\pa{\sig_i(\theta^{j-1})}=\pa{\theta_i^{j-1}}$; 
\begin{align}
\nonumber D_\mb{m} = \det\smallmat{1&\theta_{1}&\theta_{1}^2&\dots& \theta_{1}^{n}\\
1&\theta_{2}&\theta_{2}^2&\dots&\theta_{2}^{n}\\
& &\ddots& & \\
1&\theta_d&\theta_d^2&\dots&\theta_d^{n}
}&=\prod_{i<j}(\theta_{j}-\theta_{i})
=\prod_{i<j}(m_j-m_i +O_\eta(\norm{\mb{m}}^{-n}))\\
\nonumber &=\norm{\mb{m}}^{{d\choose 2}}\prod_{i<j}(\frac{m_j-m_i}{\norm{\mb{m}}}+O_\eta(\norm{\mb{m}}^{-d}))\\
\label{eqdiscest}&=\norm{\mb{m}}^{d\choose 2}O(1).
\end{align}
We conclude that 
\eqlabel{eq:lovk}{
\ell(x_\mb{m})\le \norm{v_\mb{m}}\ll \norm{\mb{m}}^{-\frac{n}{2}}.
}

We now turn to investigate $\Del_{x_\mb{m}}$ in order to find a suitable simplex set $\Phi$ in it. 
The link with the action of the diagonal group on $X$ originates from the following observation.
Let $\al\in F_\mb{m}$ and let $R_\al:F_\mb{m}\to F_\mb{m}$ denote multiplication by $\al$.
Then the following diagram commutes:
\begin{equation}\label{diagram1}
\xymatrix{
F_\mb{m} \ar[d]_{R_\al}\ar[r]^{\bm{\sig}}&\bR^d\ar[d]^{\diag{\bm{\sig}(\al)}}\\
F_\mb{m}\ar[r]_{\bm{\sig}}& \bR^d
}
\end{equation}
It is clear that for any $1\le \ell\le d$, ${\om}_\ell\Lam_\mb{m}\subset\Lam_\mb{m}$, where 
\eqlabel{omegahat}{
\om_\ell=\om_\ell(\mb{m})\defi \theta-m_\ell.
}
In fact, we have that $\om_\ell\Lam_\mb{m}=\Lam_\mb{m}$ because $\om_\ell$ is a unit in the ring of integers. To see this,
note that~\eqref{productis1} shows that both $\om_\ell$ and its inverse are algebraic integers.
It follows from~\eqref{diagram1} that $\diag{\bm{\sig}(\om_\ell)}\in UA$ stabilizes $x_\mb{m}$ and therefore, by~\eqref{deltax},
\eqlabel{eqtl}{
\mb{t}_\ell\defi\log(\av{\diag{\bm{\sig}(\om_\ell)}})\in\Del_{x_\mb{m}}.
}
We claim that $\Phi_\mb{m}=\Phi\defi\set{\mb{t}_\ell}_{\ell=1}^d$ forms the desired simplex set. 
The fact that $\sum_\ell \mb{t}_\ell=0$ follows from~\eqref{productis1}. From~\eqref{e14351}, \eqref{e14352} we 
conclude several things: First we note 
that the set $\frac{1}{\log(\norm{\mb{m}})}\Phi_\mb{m}$ converges as $\norm{\mb{m}}\to\infty$ to $\Phisym$. This implies in particular, that
$\Phi$ spans $\bR^d_0$ and in turn implies that it is a simplex set. Moreover, it shows that the distortion map $h_\Phi$ converges 
to the identity and so there exists a compact set $K$ of shapes such that $\Del_\Phi\in K$.  Second, we conclude that
\eqlabel{eq1056}{
\xi_\Phi = \max_{1\le \ell\le d}\max \mb{t}_\ell\le \log \norm{\mb{m}}+ O_\eta(1).
} 
Combining~\eqref{eq:lovk} and \eqref{eq1056} we conclude one can choose $M=M(\eta)$ such that 
$\ell(x_\mb{m})\le Me^{-\frac{n}{2}\xi_\Phi}$. Summarizing, we have established that for all but finitely many $\mb{m}\in\bZ^d(\eta)$,
$x_\mb{m}\in \Om_{M,K}$ and that $\Phi_\mb{m}$ is an $M$-tight simplex set for $x_\mb{m}$ with $\Del_\Phi$ in $K$.

We now turn to study the lattices $a(\mb{w}_\tau)x_\mb{m}$ for $\tau\in\crly{S}$ in order to verify Definition~\ref{omega epsilon c}.%
%
%
%
%
%A
%
%
%
%
%

It would be convenient to introduce the following notation: 
Given two matrices $A=(a_{ij}),B=(b_{ij})$ (say, depending on the parameter $\eta$), 
we write $$A\ll^{\on{av}}_\eta B$$ to indicate that there exists a constant $c=c(\eta)>0$, such that $\av{a_{ij}}\le c\av{b_{ij}}$.

 For simplicity let us assume that $\tau$ is the identity permutation and denote $\mb{w}\defi \mb{w}_{\on{id}}^{\Phi_\mb{m}}.$ 
We begin by estimating $a(\mb{w})$ and then continue to find a suitable matrix representing the lattice
$a(\mb{w})x_\mb{m}$: 
By definition~\eqref{def:w},  
\eq{
\mb{w}=\sum_{\ell= 1}^{d}\frac{\ell-1}{d}\cdot\mb{t}_\ell
}
and so by~\eqref{eqtl},
\begin{align}
\nonumber a(\mb{w})&=\textstyle\prod_{\ell=1}^{d}\pa{ \diag{\av{\sig_1(\om_\ell)},\dots,\av{\sig_{d}(\om_\ell)}}}^{\frac{\ell-1}{d}}\\
\nonumber &=\textstyle\diag{\av{\sig_1( \prod_{\ell=1}^{d}\om_\ell^{\ell-1})},\dots,\av{\sig_{d}(\prod_{\ell=1}^{d}\om_\ell^{\ell-1})}}^{\frac{1}{d}}\\
\label{eq2336}&\ll^{\on{av}}_\eta \norm{\mb{m}}^{-\frac{n}{2}}\diag{\norm{\mb{m}}^n, \norm{\mb{m}}^{n-1},\dots, \norm{\mb{m}}, 1},
\end{align}
%}
where the last estimate follows from~\eqref{omegahat}, \eqref{e14351}, \eqref{e14352}.

Recall that the lattice $x_\mb{m}$ has a basis corresponding to the basis of $\Lam_\mb{m}$ given by $\set{\theta^j}$, $j=0\dots n$.
We make the following change of base of $\Lam_\mb{m}$ 
\eq{
\set{1,\theta,\dots,\theta^n}\to\set{1,\om_1,\om_1\om_2,\dots,\om_1\cdots\om_n}.
}
 The corresponding basis of $x_\mb{m}$ is given by the columns of the
matrix 
\eq{
f_\mb{m}\defi
 D_\mb{m}^{-\frac{1}{d}}\mat{
 1&\sig_1(\om_1)&\dots&\sig_1(\om_1\cdots \om_n)\\
 1&\sig_2(\om_1)&\dots&\sig_2(\om_1\cdots \om_n)\\
 \vdots&             &         &       \vdots                        \\ 
 1&\sig_{d}(\om_1)&\dots&\sig_{d}(\om_1\cdots \om_n)
 }
}
%\eq{
%h_k\defi D_k^{-\frac{1}{n+1}}\mat{\sig_i(\prod_0^{j} \om_\ell)}.
%}
Recall that by~\eqref{eqdiscest},  $D_\mb{m}^{-\frac{1}{d}}= O(\norm{\mb{m}}^{-\frac{n}{2}})$ and by~\eqref{e14351}, \eqref{e14352} that 
\eq{
\sig_i(\om_\ell)=\Bigg\{
\begin{array}{ll}
O_\eta(\norm{\mb{m}})  & \textrm{ if }\ell\ne i;\\
O_\eta(\norm{\mb{m}}^{-n}) & \textrm{ if }\ell=i
\end{array}
\Longrightarrow\;
\sig_i(\prod_1^{j-1} \om_\ell)=\Bigg\{
\begin{array}{ll}
O_\eta(\norm{\mb{m}}^{j-n-1}) & \textrm{ if } i< j;\\
O_\eta(\norm{\mb{m}}^{j-1}) & \textrm{ if } i\ge j.
\end{array}
}
In other words,
\eqlabel{eq2335}{f_\mb{m}\ll^{\on{av}}_\eta
\norm{\mb{m}}^{-\frac{n}{2}}\smallmat{
1 & \norm{\mb{m}}^{-n} & \norm{\mb{m}}^{-(n-1)} & \norm{\mb{m}}^{-(n-2)} &\dots & \norm{\mb{m}}^{-1}\\
1 & \norm{\mb{m}}         & \norm{\mb{m}}^{-(n-1)} & \norm{\mb{m}}^{-(n-2)} &\dots & \norm{\mb{m}}^{-1}\\
1 & \norm{\mb{m}}         & \norm{\mb{m}}^2          & \norm{\mb{m}}^{-(n-2)} &\dots & \norm{\mb{m}}^{-1}\\
1 & \norm{\mb{m}}         & \norm{\mb{m}}^2          & \norm{\mb{m}}^3          &\dots & \norm{\mb{m}}^{-1}\\
\vdots &  \;  & \vdots      & \vdots       &\ddots& \vdots \\
1 & \norm{\mb{m}}         & \norm{\mb{m}}^2          &||\mb{m}||^3          &\dots  &\norm{\mb{m}}^n
}.
}
So, by~\eqref{eq2336}, \eqref{eq2335} we see that the matrix  $a(\mb{w})f_\mb{m}$ which represents $a(\mb{w})x_\mb{m}$ satisfies
\eqlabel{eq1413}{
a(\mb{w})f_\mb{m}\ll^{\on{av}}_\eta
\smallmat{
1 & \norm{\mb{m}}^{-n} & \norm{\mb{m}}^{-(n-1)} & \norm{\mb{m}}^{-(n-2)} &\dots & \norm{\mb{m}}^{-1}\\
\norm{\mb{m}}^{-1} & 1         & \norm{\mb{m}}^{-n} & \norm{\mb{m}}^{-(n-1)} &\dots & \norm{\mb{m}}^{-2}\\
\norm{\mb{m}}^{-2} & \norm{\mb{m}}^{-1}         & 1          & \norm{\mb{m}}^{-n} &\dots & \norm{\mb{m}}^{-3}\\
\norm{\mb{m}}^{-3} & \norm{\mb{m}}^{-2}         & \norm{\mb{m}}^{-1}          & 1          &\dots & \norm{\mb{m}}^{-4}\\
\vdots &  \;  & \vdots      & \vdots       &\ddots& \vdots \\
\norm{\mb{m}}^{-n} & \norm{\mb{m}}^{-(n-1)}         & \norm{\mb{m}}^{-(n-2)}          & \norm{\mb{m}}^{-(n-3)}          &\dots  & 1
}.
}
Write $a(\mb{w})f_\mb{m}=(c_{ij})$. Working with the inequalities in~\eqref{eq1413} and the fact that $\det a(\mb{w})f_\mb{m}=1$
it is straightforward to deduce that in the brut-force decomposition
$$(c_{ij}) = 
\smallmat{ 
\frac{c_{11}}{\av{c_{11}}}(\prod_1^d c_{\ell\ell})^{\frac{1}{d}}&\cdots& \frac{c_{1d}}{\av{c_{dd}}}(\prod_1^d c_{\ell\ell})^{\frac{1}{d}}\\
\vdots &  \frac{c_{ij}}{\av{c_{jj}}}(\prod_1^d c_{\ell\ell})^{\frac{1}{d}}&\vdots\\
 \frac{c_{d1}}{\av{c_{11}}}(\prod_1^d c_{\ell\ell})^{\frac{1}{d}}&\cdots&  \frac{c_{dd}}{\av{c_{dd}}}(\prod_1^d c_{\ell\ell})^{\frac{1}{d}}
 }
\smallmat{
\frac{\av{c_{11}}}{(\av{\prod_1^d c_{\ell\ell}})^{\frac{1}{d}}}&\cdots&0\\
\vdots&\ddots&\vdots\\
0&\cdots& \frac{\av{c_{dd}}}{(\av{\prod_1^d c_{\ell\ell}})^{\frac{1}{d}}}}$$
if we denote the matrices on the right by $g,a(\mb{s})$ respectively, then $\norm{g-I}\ll_\eta \norm{\mb{m}}^{-1}$, $\norm{\mb{s}}\ll_\eta 1$.

Combining \eqref{eq1056} and Lemma~\ref{dist lemma} we get 
that $\Del_{x_\mb{m}}\le \Del_\Phi\ll \xi_\Phi^n\ll(\log\norm{\mb{m}})^n$ and so choosing $\eps, C$ appropriately we can make sure that
 $\norm{g-I}\le Ce^{-\av{\Del_x}^\eps}$. This concludes the proof that $x_\mb{m}\in\Om_M(\eps,C)$ and hence the proof of the Proposition.

\end{proof}

\bibliographystyle{plain}
\bibliography{elonbib}
\end{document}